\documentclass[11pt]{amsart}
\usepackage{amssymb,amscd,amsthm,verbatim}
\usepackage{amsbsy}
\usepackage{latexsym}

\usepackage[
colorlinks,
citecolor=red,
urlcolor=blue,
linkcolor=red,
allcolors=black,
backref=page
]{hyperref}

\date{\today}

\newcommand{\Z}{{\mathbb Z}}
\newcommand{\R}{{\mathbb R}}
\newcommand{\Q}{{\mathbb Q}}
\newcommand{\C}{{\mathbb C}}

\newcommand{\N}{{\mathbb N}}

\def\cC{\mathcal{C}}
\def\cA{\mathcal{A}}

\def\cW{\mathcal{W}}
\def\P{\mathbb{P}}

\newcommand{\CC}{{\mathcal{C}}}

\newcommand{\SL}{{\mathrm{SL}}}

\renewcommand{\sl}{{\mathfrak{sl}}}

\newtheorem{theorem}{Theorem}[section]
\newtheorem{lemma}[theorem]{Lemma}

\theoremstyle{definition}

\newtheorem*{definition}{Definition}

\newtheorem{remark}[theorem]{Remark}
\theoremstyle{plain}

\allowdisplaybreaks \numberwithin{equation}{section}

\begin{document}

\title{Pure Point Spectrum is Generic}

\author[A.\ Avila]{Artur Avila}

\address{Institut f\"ur Mathematik, Universit\"at Z\"urich, Winterthurerstrasse 190, 8057 Z\"urich, Switzerland and IMPA, Estrada D. Castorina 110, Jardim Bot\^anico, 22460-320 Rio de Janeiro, Brazil}

\email{artur.avila@math.uzh.ch}

\author[D.\ Damanik]{David Damanik}

\address{Department of Mathematics, Rice University, Houston, TX~77005, USA}

\email{damanik@rice.edu}

\thanks{D.\ D.\ was supported in part by NSF grants DMS--2054752 and DMS--2349919}

\keywords{Schr\"odinger operators, generic spectral properties, localization}

\begin{abstract}
We consider Schr\"odinger operators in $\ell^2(\Z)$ with real-valued potentials in $\ell^\infty(\Z)$ and show that the generic spectral type is pure point. More specifically, we show that for a generic bounded potential, the essential spectrum of the associated Schr\"odinger operator is a Cantor set and has zero weight with respect to all spectral measures.
\end{abstract}

\maketitle

\section{Introduction}

The title of this paper is intentionally provocative and is a play on a theme from the 1990's, namely that singular continuous spectrum is generic; see, for example, \cite{DJLS96, dRJMS94, dRMS94, HKS95, JS94, S95} and references therein.

For a bounded potential $V:\Z \to \R$, we consider the associated Schr\"odinger operator $H_V$ in $\ell^2(\Z)$, which acts as follows,
$$
[H_V \psi](n) = \psi(n+1) + \psi(n-1) + V(n) \psi(n),
$$
and is bounded and self-adjoint. We write $\ell^\infty(\Z)$ for the space of these potentials (instead of the more suggestive $\ell^\infty(\Z,\R)$), equipped with the uniform norm $\| \cdot \|_\infty$.

The genericity statement in the title of this paper refers to the following result:

\begin{theorem}\label{t.1}
For a generic $V \in \ell^\infty(\Z)$, the Schr\"odinger operator $H_V$ has pure point spectrum with exponentially decaying eigenvectors.
\end{theorem}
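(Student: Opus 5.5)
The plan is to reduce Theorem~\ref{t.1} to a statement about the essential spectrum. We show that for generic $V$ the set $\sigma_{\mathrm{ess}}(H_V)$ has Lebesgue measure zero and carries zero weight for the spectral measure
$$
\mu_V=\mu_{V,\delta_0}+\mu_{V,\delta_1},
$$
which is a maximal spectral measure in dimension one. This suffices, because $\sigma(H_V)\setminus\sigma_{\mathrm{ess}}(H_V)$ consists of isolated eigenvalues of finite multiplicity. So if $\mu_V(\sigma_{\mathrm{ess}}(H_V))=0$, then $\mu_V$ is supported on a countable set of discrete eigenvalues, and the spectrum is pure point. Exponential decay of the eigenvectors then follows from a Combes--Thomas estimate. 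The eigenvalue $E$ lies at positive distance from $\sigma_{\mathrm{ess}}(H_V)$, so we may write $H_V=H_{\tilde V}+K$ with $K$ finite rank and $E\notin\sigma(H_{\tilde V})$, where $\tilde V$ agrees with $V$ outside a large box and is chosen there to push the spectrum away from $E$. Combes--Thomas for $(H_{\tilde V}-E)^{-1}$, applied through $\psi=-(H_{\tilde V}-E)^{-1}K\psi$, gives exponential decay at a rate depending on $E$. That dependence is harmless, since the theorem only asks that each eigenvector decay exponentially.

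For genericity, we define for each $n\ge1$ the set $U_n$ of all $V$ for which there exist a finite union $F$ of closed intervals and some $\eta>0$ with
$$
|F|<1/n,\qquad B_\eta\bigl(\sigma_{\mathrm{ess}}(H_V)\bigr)\subset F,\qquad \mu_V(F)<1/n .
$$
Here $B_\eta$ denotes the open $\eta$-neighborhood. We claim $U_n$ is open. First, $\|W-V\|_\infty<\eta/2$ implies $\sigma_{\mathrm{ess}}(H_W)\subset B_{\eta/2}(\sigma_{\mathrm{ess}}(H_V))$, by Weyl's criterion or norm-resolvent continuity. Second, $W\mapsto\mu_W$ is weak-$*$ continuous, and $\mu(F)$ is upper semicontinuous for closed $F$, so the strict inequality $\mu_W(F)<1/n$ persists. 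On $\bigcap_n U_n$ we then get $\mu_V(\sigma_{\mathrm{ess}})\le 1/n$ and $|\sigma_{\mathrm{ess}}|<1/n$ for every $n$, which is exactly what the first paragraph needs. Adding analogous open conditions, for instance that $\sigma_{\mathrm{ess}}$ misses a prescribed finite $1/n$-net of gaps, would also yield the Cantor statement, but that is not needed for Theorem~\ref{t.1}.

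The main obstacle is density of each $U_n$. Given $V$ and $\varepsilon>0$, we must find $W$ with $\|W-V\|_\infty<\varepsilon$, an essential spectrum of small measure, and small $\mu_W$-weight near it.

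The weight condition is the easier half. We take $W=V$ on a large box $[-L,L]$ and arrange that the tails produce essential spectrum of tiny measure. Since $\mu_W$ has total mass $2$, we can then pick a finite union $F$ of intervals around $\sigma_{\mathrm{ess}}(H_W)$ thin enough that $\mu_W(F\setminus\sigma_{\mathrm{ess}})$ is small. The remaining issue is $\mu_W(\sigma_{\mathrm{ess}})$ itself, which we would first try to make vanish by a further tiny rank-one or boundary perturbation, using Simon--Wolff/Aronszajn--Donoghue type arguments for a zero-measure set.

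The harder half is controlling the tails. Here we would discretize $V$ to a finite alphabet of step $\varepsilon/2$ and rebuild each tail as a concatenation of blocks. Each block is close to the corresponding piece of $V$, but the blocks are interleaved so that every right and left limit lies in the hull of a single limit-periodic or periodic-approximant family. Then $\sigma_{\mathrm{ess}}(H_W)$, being the union of the spectra of the right and left limits, is contained in a set of small measure. The tool for this is the fact that within any $\varepsilon$-ball one can find periodic potentials of long period whose bands have arbitrarily small total measure, by opening gaps as in constructions of zero-measure spectrum for limit-periodic operators. We expect the genuine difficulty to be making this compatible with $W$ staying uniformly close to an arbitrary, non-recurrent $V$.
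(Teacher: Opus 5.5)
\textbf{There is a genuine gap: the density of $U_n$.} The rest of your reduction is sound. Zero $\mu_V$-weight on $\sigma_{\mathrm{ess}}(H_V)$ gives pure point spectrum. The local change of $V$ plus Combes--Thomas gives exponential decay of each eigenvector. The sets $U_n$ are open.

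The density problem is not a technicality. Because you built $|F|<1/n$ into $U_n$, density of every $U_n$ would make $\bigcap_n U_n$ a dense $G_\delta$ on which $|\sigma_{\mathrm{ess}}(H_V)|=0$. That would show a generic bounded potential has essential spectrum of zero Lebesgue measure. The paper explicitly leaves this open (the authors do not even venture a guess), and Theorem~\ref{t.1} does not need it.

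Your ``easier half'' does not avoid this. Averaging over a coupling constant gives $\mu_W(\sigma_{\mathrm{ess}}(H_W))=0$ only once you already know $|\sigma_{\mathrm{ess}}(H_W)|=0$. (You would also have to couple at two sites, say $0$ and $1$, and use Fubini, since coupling at $0$ alone does not change $H_W$ on the orthogonal complement of the cyclic subspace of $\delta_0$.)

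So everything rests on the tail construction, and it cannot work as described. Suppose every finite word over two letters $a,b$ with $|a-b|>4\varepsilon$ occurs arbitrarily far to the right in $V$. Then for every $x\in\{a,b\}^{\Z}$, any $W$ with $\|W-V\|_\infty<\varepsilon$ has a right limit $y$ with $\|y-x\|_\infty\le\varepsilon$. Hence $W$ has infinitely many right limits that are pairwise more than $2\varepsilon$ apart in sup norm. But the hull of a limit-periodic potential, even together with the hulls of its periodic approximants, is compact in sup norm, so the right limits cannot all lie in it. Without a single such family you have no way to bound the measure of the union of the spectra of all the limits. This is exactly the difficulty you flag at the end and leave unresolved.

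\textbf{The missing idea.} The essential spectrum never needs to be small. You only have to remove from it finitely many energies that carry most of the spectral mass. The paper shows that $\epsilon$-discrete potentials are dense; they are open by upper semicontinuity of $V\mapsto\mu_V(\sigma_{\mathrm{ess}}(H_V))$. The argument runs as follows.
\begin{enumerate}
\item By Theorem~\ref{localization} (Gorodetski--Kleptsyn) and Remark~\ref{r.malleable}, $\infty$-malleable potentials with pure point spectrum and semi-uniformly localized eigenvectors are dense.
\item For such $V$, fix eigenvalues $E_1<\dots<E_N$ of total mass $>1-\epsilon$, and leave $V$ unchanged on a long central window.
\item Far out, insert hyperbolic blocks on which the transfer matrices at all the $E_j$ are simultaneously large (Lemma~\ref{growing}). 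Separate them by length-two buffer blocks that control the angles, so that each tail is uniformly hyperbolic at every $E_j$.
\item Use an $N$-malleable block already present in $V$ (Theorem~\ref{malleable}) to steer all $N$ eigenvectors at once into the stable directions.
\end{enumerate}
Each $E_j$ then becomes a discrete eigenvalue of $H_{V'}$ with eigenvector $\ell^2$-close to the old one. So $\mu_{V'}(\sigma_{\mathrm{ess}}(H_{V'}))$ is small, even though $\sigma_{\mathrm{ess}}(H_{V'})$ itself may still be large.
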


Here and throughout the paper, an assertion holds for a generic $V$ if there is a dense $G_\delta$ set of $V$'s for which the assertion is true.

\begin{remark}
Theorem~\ref{t.1} should be contrasted with the genericity result from the 1990's. It is a consequence of Simon's Wonderland Theorem from \cite{S95} that for every $R > 0$, a generic element $V$ of the product space $[-R,R]^\Z$ yields an operator $H_V$ that has purely singular continuous spectrum. It follows that there are two natural perspectives on Schr\"odinger operators with bounded potentials (namely with respect to pointwise convergence and uniform convergence, respectively) which result in drastically different generic spectral behavior.
\end{remark}

\begin{remark}
In other settings, the generic spectral type turns out to be singular continuous as well; compare, for example, \cite{AD05, AD24, BD08}. It is notable that Theorem~\ref{t.1} exhibits the first setting (where other spectral types are possible and) in which pure point spectrum is generic.
\end{remark}

Theorem~\ref{t.1} is an immediate consequence of the following result:

\begin{theorem}\label{t.2}
For a generic $V \in \ell^\infty(\Z)$, all spectral measures of the associated Schr\"odinger operator $H_V$ assign zero weight to $\sigma_\mathrm{ess}(H_V)$.
\end{theorem}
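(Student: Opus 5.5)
Our approach is to write the set of good potentials as a countable intersection of open dense sets. For Theorem~\ref{t.2} it suffices to control the spectral measures $\mu_{0}$ and $\mu_{1}$ of $\delta_0$ and $\delta_1$. Indeed, $\{\delta_0,\delta_1\}$ is a cyclic family for $H_V$, so every spectral measure is absolutely continuous with respect to $\mu_0+\mu_1$. We set $\mu_V = \mu_0 + \mu_1$ and aim to show that, generically, $\mu_V(\sigma_{\mathrm{ess}}(H_V)) = 0$.

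The key idea is that uniform perturbations are strong enough to create large gaps. Given $V$ and $\varepsilon>0$, we can pick a potential $W$ with $\|W-V\|_\infty<\varepsilon$ with the following structure. Near the origin, $W$ agrees with $V$ on a large window $[-L,L]$. Outside this window, $W$ is a carefully chosen perturbation, for instance a periodic or locally constant "shifted" potential on blocks, designed so that $\sigma_{\mathrm{ess}}(H_W)$ is contained in a finite union of tiny intervals $J$ of total length $\eta$. This is possible because we may add small step functions taking finitely many values, which produces Cantor-like gap opening; the generic Cantor set statement in the abstract points in this direction. Once $\sigma_{\mathrm{ess}}(H_W)\subset J$, we know that $\mu_W(J)$ is small provided $J$ avoids the eigenvalues supported near the origin. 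Alternatively, we can directly arrange that $\mu_W(J)<\eta$.

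We then aim for open conditions. We define
$$U_{n} = \{V : \exists \text{ open } J \text{ of Lebesgue measure} < 1/n,\ \sigma_{\mathrm{ess}}(H_V)\subset J,\ \mu_V(\overline{J}) < 1/n\}.$$
The openness of $U_n$ needs care, and in particular two points must be checked. First, the essential spectrum is upper semicontinuous in $\|\cdot\|_\infty$: if $\|V'-V\|_\infty<\delta$, then $\sigma_{\mathrm{ess}}(H_{V'})$ lies in the $\delta$-neighborhood of $\sigma_{\mathrm{ess}}(H_V)$, so it stays inside $J$ after slightly shrinking the margin. Second, $V\mapsto\mu_V$ is weak-$*$ continuous, so we cannot simply pass $\mu_V(\overline{J})$ to the limit. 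We fix this by using an upper semicontinuous quantity on closed sets, namely $\mu(F)$ for closed $F$, which is upper semicontinuous under weak convergence. We therefore replace $\overline J$ by a slightly larger closed set $F\supset J$ and require $\mu_V(F)<1/n$. With this choice the defining condition is open.

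With these $U_n$ in hand, $\bigcap_n U_n$ gives potentials where $\sigma_{\mathrm{ess}}$ has $\mu_V$-measure at most $1/n$ for every $n$, and hence measure zero. The main obstacle is density, namely the construction of $W$ near $V$ in which $\sigma_{\mathrm{ess}}(H_W)$ is covered by a small set of small $\mu_W$-mass. This requires two steps. First, we must show that a uniformly small perturbation can collapse the essential spectrum into a set of small measure; here the plan is to use blocks of potential values on a coarse lattice $\varepsilon\Z$, combined with a large-deviation or gap-opening argument for the resulting operators. Second, we must ensure that the mass of $\mu_W$ on this set is small. For the second step we would first try exploiting that eigenvectors localized on the window $[-L,L]$ carry most of the mass of $\delta_0,\delta_1$: we choose the far-away part so that it decouples, and then move $J$ off the finitely many relevant eigenvalues by a further small shift.
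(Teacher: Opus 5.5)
\textbf{Verdict: there is a genuine gap in the density step.} Your reduction to $\mu_0+\mu_1$ is fine, and so is openness: for a fixed open $J$, the inclusion $\sigma_{\mathrm{ess}}(H_{V'})\subset J$ persists under small perturbations, and $V'\mapsto\mu_{V'}(\overline J)$ is already upper semicontinuous because $\overline J$ is closed. The problem is density, and it is not a technicality. Your sets $U_n$ force $|\sigma_{\mathrm{ess}}(H_V)|<1/n$, and $\{V:|\sigma_{\mathrm{ess}}(H_V)|<\eta\}$ is open by the same Hausdorff continuity. So density of every $U_n$ would prove that a generic bounded potential has essential spectrum of zero Lebesgue measure. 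The paper explicitly states this as an open question in the introduction. Your first density step, an $\varepsilon$-perturbation of an arbitrary $V$ whose essential spectrum fits in a set of measure $<\eta$, is exactly that open question, and nothing in your sketch supplies it.

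The appeal to the generic Cantor property does not help. It is insufficient, since empty interior says nothing about measure. It is also circular, since the paper deduces the empty-interior part of Theorem~\ref{t.3} from Theorem~\ref{t.2}. Nor is there any structure on which to run a gap-opening or large-deviation argument: $V$ is arbitrary. If, say, its right limits include every element of $[-1,1]^{\Z}$, then any $W$ with $\|W-V\|_\infty\le\varepsilon$ has right limits $\varepsilon$-shadowing all of them, including every constant $a\in[-1,1]$. All of their spectra would then have to fit inside one fixed set of tiny measure.

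The mass step has a second gap. For general $V$ (e.g.\ $V=0$) there are no eigenvectors localized near the origin. Also, an $\varepsilon$-small change in $\|\cdot\|_\infty$ cannot ``decouple'' the window from the far region.

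The missing idea is that Theorem~\ref{t.2} needs no control on the size of $\sigma_{\mathrm{ess}}$; it only needs $\sigma_{\mathrm{ess}}$ to miss finitely many well-chosen energies. The paper proceeds as follows.
\begin{enumerate}
\item By a small random perturbation it passes to a potential with pure point spectrum and semi-uniformly localized eigenfunctions (Theorem~\ref{localization}, from Gorodetski--Kleptsyn).
\item It chooses eigenvalues $E_1<\dots<E_N$ carrying spectral mass $>1-\epsilon$.
\item It then perturbs only far from the origin. Hyperbolic blocks (Lemma~\ref{growing}) and buffer blocks produce, at every $E_j$, directions that decay uniformly exponentially on both half-lines, which gives $E_j\notin\sigma_{\mathrm{ess}}(H_{V'})$.
\item Malleable blocks steer each eigenfunction into those stable directions. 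This uses Theorem~\ref{malleable}: for every $E\in\R^N\setminus\cC^N$ the map $V\mapsto\Phi^{N,T}(E,V)$ is open at a malleable point. As a result, $E_j$ remains an eigenvalue, with eigenfunction $O(e^{-c'm_2})$-close to $\phi_j$.
\end{enumerate}
Even if you let the eigenvalues move, as your sketch does, what you need is a gap of $\sigma_{\mathrm{ess}}$ around each $E_j$ whose size does not shrink as the window grows. That is again a statement about these specific energies, to be obtained by making the far-away transfer matrices hyperbolic there, not by a global collapse of the essential spectrum.
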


This theorem shows that for a generic $V \in \ell^\infty(\Z)$, the spectral measures are supported by the discrete spectrum, which consists of eigenvalues that have associated exponentially decaying eigenfunctions, and thus Theorem~\ref{t.1} follows. 

It also follows from Theorem~\ref{t.2} that for a generic $V \in \ell^\infty(\Z)$, the essential spectrum of $H_V$ must have empty interior. In addition, we will show that the absence of isolated points in the essential spectrum is a generic property, and hence we will establish the following result:

\begin{theorem}\label{t.3}
For a generic $V \in \ell^\infty(\Z)$, the essential spectrum of  the associated Schr\"odinger operator, $\sigma_\mathrm{ess}(H_V)$, is a Cantor set.
\end{theorem}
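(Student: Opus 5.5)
Theorem~\ref{t.3} splits into three properties of $\sigma_\mathrm{ess}(H_V)$: it is nonempty and closed, it has empty interior, and it has no isolated points. Closedness is automatic. Nonemptiness holds because $H_V$ acts on an infinite-dimensional space and is bounded. Empty interior generically follows from Theorem~\ref{t.2}, as the text notes. If $\sigma_\mathrm{ess}$ contained an open interval $I$, the spectral measure $\mu$ of the pair $\delta_0,\delta_1$ would have to vanish on $I$. Since $\delta_0,\delta_1$ together are cyclic, $\mu(I)=0$ would force the spectral projection of $I$ to vanish, so $I\cap\sigma(H_V)=\emptyset$, a contradiction. It therefore remains to show that the set of $V$ for which $\sigma_\mathrm{ess}(H_V)$ has no isolated points contains a dense $G_\delta$. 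Intersecting with the dense $G_\delta$ from Theorem~\ref{t.2} then gives the result.

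For the $G_\delta$ property, the natural definition is
\[
\mathcal{P}_k=\Big\{V:\ \text{for every } E\in\sigma_\mathrm{ess}(H_V)\ \text{there is } E'\in\sigma_\mathrm{ess}(H_V)\ \text{with } 0<|E-E'|<\tfrac1k\Big\},
\]
but this set need not be open. Instead I would use a countable family of open sets built from distances. For rational $a<b$ with $b-a<1/k$, let $U_{a,b,k}$ be the open set of $V$ for which either $\sigma_\mathrm{ess}(H_V)\cap[a,b]=\emptyset$, or $\sigma_\mathrm{ess}(H_V)$ meets two disjoint open subintervals of $(a-1/k,\,b+1/k)$. The map $V\mapsto\sigma_\mathrm{ess}(H_V)$ is $1$-Lipschitz in the Hausdorff metric, since $\|H_V-H_W\|\le\|V-W\|_\infty$ and Weyl's criterion is stable under bounded perturbations. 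Consequently ``meets an open set'' and ``misses a compact set'' are open conditions, and $U_{a,b,k}$ is open. The intersection over all $a,b,k$ consists of $V$ whose essential spectrum has no isolated points. Indeed, an isolated point $E$ lies in some small rational $[a,b]$, and for $k$ large the window around $[a,b]$ contains no other essential spectrum.

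Density is the main step. Given $V$ and $\varepsilon>0$, I need $W$ with $\|W-V\|_\infty<\varepsilon$ whose essential spectrum has no isolated points at any scale, or at least satisfies the condition for $U_{a,b,k}$. Essential spectrum is determined by the behaviour of $V$ at infinity: by the Last--Simon characterization it is the union of the spectra of right limits. My plan is to modify $V$ on a sparse sequence of far-out blocks $B_j$ with $|B_j|\to\infty$. On each block I choose $W$ to be a window of a fixed potential $V$ plus a small perturbation, namely $V+\varepsilon\,\omega$. Here $\omega$ is a window of a periodic, or limit-periodic, pattern with many distinct periods, chosen so that the corresponding right limits have spectra equal to an interval union, or a Cantor set, without isolated points.

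The cleanest version is a per-$(a,b,k)$ argument. Given an isolated point $E\in[a,b]$ of $\sigma_\mathrm{ess}(H_V)$, it arises from some right limit $V_\infty$. I would insert into far-away blocks copies of $V_\infty$-windows modified by $\varepsilon$-small periodic perturbations of differing periods. Each modified right limit has spectrum near $E$ that is a nondegenerate band of length comparable to $\varepsilon$. This produces essential spectrum on at least two disjoint intervals near $E$, while changing $V$ by less than $\varepsilon$. The hard part is making this work for all isolated points in $[a,b]$ simultaneously. I would handle it by a finite cover of $[a,b]$ at scale $1/k$ and compactness of the essential spectrum. Only finitely many perturbations are needed for one open set, and a Baire intersection then does the rest.
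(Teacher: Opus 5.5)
\textbf{There is a genuine gap in the density step.} The rest of your reduction is sound:
\begin{itemize}
\item The empty-interior argument via cyclicity of $\{\delta_0,\delta_1\}$ is correct.
\item Each $U_{a,b,k}$ is open by the Hausdorff $1$-Lipschitz continuity of $V\mapsto\sigma_{\mathrm{ess}}(H_V)$.
\item $\bigcap_{a,b,k}U_{a,b,k}$ is exactly the set of $V$ for which $\sigma_{\mathrm{ess}}(H_V)$ has no isolated points.
\end{itemize}
This $G_\delta$ description is more robust than the paper's sets $\{s(V)<\delta\}$. Their openness cannot be proved with a radius depending only on $s(V)$, as the paper's argument tries to do, because a small nondegenerate cluster of essential spectrum can collapse to an isolated point under a perturbation of size comparable to its diameter.

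\textbf{Why the density step fails as written.} You claim that an $\varepsilon$-small periodic perturbation of the right limit $V_\infty$ has spectrum near $E$ forming ``a nondegenerate band of length comparable to $\varepsilon$''. This is false. Since $\sigma(H_{V_\infty})\subseteq\sigma_{\mathrm{ess}}(H_V)$, the point $E$ is isolated in $\sigma(H_{V_\infty})$. It is therefore an eigenvalue, and it is simple because the Wronskian is constant. For any perturbation of norm less than half its distance to the rest of $\sigma(H_{V_\infty})$, the Riesz projection stays rank one. So there is exactly one spectral point near $E$; $V_\infty+\varepsilon\omega$ is not periodic and has no bands. Consequently nothing in your construction guarantees two distinct points in $(a-1/k,b+1/k)$. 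If all copies of the $V_\infty$-window are modified, $E$ may just move to a single nearby point, and different periodic patterns can shift it to the same place.

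\textbf{The missing idea.} It is elementary, and it is what the paper uses: keep infinitely many copies untouched so that $E$ survives, and shift the others by a nonzero constant.

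First, if $V\notin U_{a,b,k}$, then $\sigma_{\mathrm{ess}}(H_V)\cap(a-1/k,b+1/k)$ is a single point $E\in[a,b]$. So there is no ``simultaneous'' problem, and no covering argument is needed.

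Next, take finitely supported Weyl vectors $\psi_n$ for $E$ with pairwise disjoint supports. Let $W=c$ on $\mathrm{supp}\,\psi_n$ for even $n$ and $W=0$ elsewhere, with $0<|c|<\min\{\varepsilon,1/k\}$. Then $(H_{V+W}-E-c)\psi_n=(H_V-E)\psi_n$ for even $n$, and $(H_{V+W}-E)\psi_n=(H_V-E)\psi_n$ for odd $n$. Hence both $E$ and $E+c$ lie in $\sigma_{\mathrm{ess}}(H_{V+W})\cap(a-1/k,b+1/k)$, i.e.\ $V+W\in U_{a,b,k}$.

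With this replacement your $U_{a,b,k}$ framework gives a complete proof. It also spares you the paper's extra step of checking that the remaining isolated points are not pushed up to separation $\delta$.
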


As usual, a subset of $\R$ is called a Cantor set if it is compact, has no isolated points, and has empty interior.

\begin{remark}
The absence of isolated points in the essential spectrum does not always hold. For an example, choose a sparse potential of the form $V(n) = \sum_{k = 1}^\infty \delta_{n, k!}$. Then, it follows from \cite{LS06} that $\sigma_\mathrm{ess}(H_V) = [-2,2] \cup E_+$, where $E_+ > 2$ is the unique discrete eigenvalue of $H_{\tilde V}$, where $\tilde V(n) = \delta_{n,0}$. This example also shows that the essential spectrum may contain intervals, which is of course obvious as already the free case ($V \equiv 0$) has this property.
\end{remark}

\begin{remark}
It would be interesting to investigate whether the Lebesgue measure of the essential spectrum associated with a generic potential is zero.  At the moment we do not have an informed guess to make. This problem is in the same spirit as the analogous question in the case of dynamically defined potentials. For example, it is known that in the quasi-periodic case (and indeed many other cases as well), a generic continuous sampling function leads to an operator family with a Cantor spectrum \cite{ABD09, ABD12}, but it is not known whether the Lebesgue measure of this set vanishes generically as well. Some related results on obtaining zero-measure spectrum via small $\|\cdot\|_\infty$ perturbations are contained in \cite{CDFG22, DL06}.
\end{remark}

%\section{Statements of the results}

%\begin{theorem} \label {discrete}
%For generic $V \in \ell^\infty(\Z)$, the spectrum of $H_V$ is pure point, and all eigenvalues are outside the essential spectrum.
%\end{theorem}

%\begin{theorem} \label {cantor}
%For generic $V \in \ell^\infty(\Z)$, the essential spectrum is Cantor.
%\end{theorem}

The remainder of the paper is structured as follows. In Section~\ref{sec.2} we reduce Theorem~\ref{t.2} to an effective perturbation result, Theorem~\ref{malleable}, which may be of independent interest. Theorem~\ref{malleable} is then proved in Section~\ref{sec.3}. Finally, we prove Theorem~\ref{t.3} in Section~\ref{sec.4}.

\section{Proof of Theorem \ref{t.2} Modulo Effective Perturbation}\label{sec.2}

In this section we reduce Theorem~\ref{t.2} to Theorem~\ref{malleable}, which is formulated below. 

\begin{definition}
We say that $V \in \ell^\infty(\Z)$ is $\epsilon$-\emph{discrete} if the spectral measure\footnote{When we refer to ``the'' spectral measure, we have in mind (one half) the canonical spectral measure in the sense of \cite{DF22}, that is, the arithmetic average of the spectral measures corresponding to $H$ and the vectors $\delta_0,\delta_1$. Since the pair $\{ \delta_0, \delta_1 \}$ is cyclic, the spectral measure of every $\psi \in \ell^2(\Z)$ is absolutely continuous with respect to the canonical spectral measure.} of the essential spectrum is less than $\epsilon$, and $V$ is said to be of \emph{discrete type} if it is $\epsilon$-\emph{discrete} for every $\epsilon>0$, that is, if  the spectral measure of the essential spectrum is zero.
\end{definition}

Theorem~\ref{t.2} can be reformulated by saying that a generic $V \in \ell^\infty(\Z)$ is of discrete
type. The set of $V$ that are $\epsilon$-discrete is open for every $\epsilon > 0$.\footnote{The essential spectrum of $H_V$ depends continuously on $V$ in the Hausdorff topology and the spectral measure depends continuously on $V$ in the weak-$*$ topology. Thus, the map $\ell^\infty(\Z) \ni V \mapsto \mu(\sigma_\mathrm{ess}(H_V))$ is upper semicontinuous.}  So in order to prove Theorem~\ref{t.2} it is enough to show that $\epsilon$-discrete potentials are dense.

Let us denote $A^{(t)}=\begin{pmatrix} t & -1\\1&0 \end{pmatrix}$.

\begin{definition}
(a) For $N,T \geq 1$, let $\Phi^{N,T}:\R^N \times \R^T \to \SL(2,\R)^N$ be given by $\Phi^{N,T}=(\Phi^{N,T}_j)_{j=1}^N$, $\Phi^{N,T}_j(E,V)=A^{(E_j-V_T)} \cdots A^{(E_j-V_1)}$.
\\[1mm]
(b) We let $\cC=\cC^N \subset \R^N$ be the set of all $E \in \R^N$ with $E_i=E_j$ for some $1 \leq i<j \leq N$.
\\[1mm]
(c) We say that $V^N \in \R^T$ is $N$-\emph{malleable} if for every $E \in \R^N \setminus \cC^N$, $V \mapsto \Phi^{N,T}(E,V)$ is open at $V^N$.
\end{definition}

%$w=(w_1,...,w_N) \in \SL(2,\R)^N$ is $\delta$-close to $\Phi^{N,T}(E,V)$
%then there exists $V' \in \R^T$ which is $\epsilon$-close to $V$ such that
%$\Phi^{N,T}(E,V')=w$.

The following simultaneous perturbation
result plays an important role in our construction.  Its proof
is rather technical and will be given in the next section.

\begin{theorem} \label {malleable}

For every $N \geq 1$, there exist $T \geq 1$ and a Baire generic subset of $V \in \R^T$ which are $N$-malleable.

\end{theorem}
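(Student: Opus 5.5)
The plan is to prove something stronger than openness: for generic $V\in\R^T$, the differential $D_V\Phi^{N,T}(E,V)$ is surjective onto $T_{\Phi(E,V)}\SL(2,\R)^N$, which has dimension $3N$, simultaneously for all $E\in\R^N\setminus\cC^N$. Surjectivity of the differential gives openness by the implicit function theorem. Right-translating to the identity, $\partial\Phi_j/\partial V_k$ corresponds to the element $-M_{j,k}\,B\,M_{j,k}^{-1}$ of $\sl(2,\R)$, where $B=\begin{pmatrix}1&0\\0&0\end{pmatrix}$ and $M_{j,k}=A^{(E_j-V_T)}\cdots A^{(E_j-V_{k+1})}$. (The matrix $B$ is not traceless, but right-translating a tangent vector of $\SL(2,\R)$ always gives a traceless matrix, so the tangent-space identification takes care of this.) Thus we need the $T$ vectors
$$X_k(E,V)=\bigl(M_{1,k}BM_{1,k}^{-1},\dots,M_{N,k}BM_{N,k}^{-1}\bigr)\in\sl(2,\R)^N$$
to span. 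The failure condition is polynomial in $(E,V)$: all $3N\times3N$ minors vanish. Hence the bad set $\mathcal{B}_T=\{(E,V):E\notin\cC,\ \mathrm{rank}<3N\}$ is semialgebraic.

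\textbf{Step 1: dimension count.} It suffices to find $T$ such that for every fixed $E\notin\cC$, the fiber $\mathcal{B}_T(E)\subset\R^T$ has codimension at least $N+1$. Then $\dim\mathcal{B}_T\le N+T-N-1<T$. Its projection to $\R^T$ is semialgebraic of dimension $<T$, so its closure is nowhere dense, and the complement contains an open dense set of $N$-malleable $V$.

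To get the codimension, use concatenation. Split $V$ into $m=N+1$ blocks of length $L$. The span of the $X_k$ for the full word contains the span for each block, conjugated by the (invertible) product of the later blocks. So if any single block has full rank, the whole word does. The failure fiber is therefore contained in a product of $m$ block-failure fibers. If each block-failure fiber is a proper algebraic subset of $\R^L$, this product has codimension at least $m=N+1$.

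\textbf{Step 2: uniform $L$.} Let $Z_L=\{E: \text{every } V\in\R^L \text{ has rank}<3N\}$. This is Zariski closed in $E$ and decreasing in $L$, since a longer word contains a shorter one as a subword by the same concatenation remark. By Noetherianity the sequence $Z_L$ stabilizes. So it suffices to show that for each individual $E\notin\cC$ some finite word $V$ has full rank; then $Z_L\subset\cC$ for $L$ large. Taking $T=(N+1)L$ finishes the argument.

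\textbf{Step 3 (main obstacle): pointwise existence of a full-rank word.} Fix distinct $E_1,\dots,E_N$. Let $G\subset\SL(2,\R)^N$ be the group generated by the elements $g(v)=(A^{(E_j-v)})_{j=1}^N$, $v\in\R$. Since $v\mapsto g(v)$ is a connected algebraic family, $G$ is connected. The span of all achievable $X_k$ over all words is an $\mathrm{Ad}(G)$-invariant subspace containing $(B,\dots,B)$ up to trace normalization. By an accessibility (Chow/Sussmann-type) argument, if this span is not everything, then $G$ lies in a proper algebraic subgroup. So I would show that the Zariski closure of $G$ is all of $\SL(2,\R)^N$.

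Each projection of $G$ to a single factor is Zariski dense in $\SL(2,\R)$, since it contains both elliptic and hyperbolic elements with varying traces. By Goursat's lemma, a proper subgroup with full projections must contain a graph relation $g_j=\varphi(g_i)$ for some isomorphism $\varphi$, possibly up to the center $\pm1$. Algebraic automorphisms of $\SL(2)$ are conjugations by $\mathrm{GL}(2)$, which preserve trace, so such a relation would force $\pm(E_i-v)=E_j-v$ for all $v$. This is impossible for $E_i\ne E_j$, including the sign case, since $v$ varies. Hence $G$ is Zariski dense.

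Turning Zariski density into an actual finite word of full rank is then standard: the Lie algebra generated by the $\mathrm{Ad}$-orbit of $(B,\dots,B)$ is an ideal in the full product algebra $\sl(2)^N$. Each coordinate projection of it is nonzero, so by simplicity of the factors and the Goursat step it must be everything. The delicate part I expect to check carefully is exactly this passage from ``$\mathrm{Ad}(G)$-invariant span'' to ``the span achieved by a single finite word is all of $\sl(2,\R)^N$''.
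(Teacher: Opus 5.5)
Your argument is correct in substance, but it differs from the paper's proof at each of its three stages.

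\emph{Pointwise existence.} The paper (Lemma~\ref{existence}) does an explicit computation. It takes the word made of $2N+1$ copies of $v$ followed by $2N+1$ copies of $v'$, with $2v,2v'\notin\mathcal{E}+\mathcal{E}$. The left-translated derivatives are then $L(E_i-v)^{j-1}c$ and $L(E_i-v)^{2N+1}L(E_i-v')^{j-1}c$. Cyclicity of $(L(e-v)-\mathrm{id})c$ on the invariant planes $F(e-v)$ (whose spectra are disjoint for distinct $e$ thanks to $2v\notin\mathcal{E}+\mathcal{E}$), together with $F(e-v)+F(e-v')=\sl(2,\R)$, gives full rank with the uniform length $T'=4N+2$. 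You obtain existence non-constructively from Zariski density of the group generated by the $g(v)$. Your trace obstruction $E_j-v=\pm(E_i-v)$ is exactly the resonance the paper excludes.

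\emph{Uniform length.} Because your length is not explicit, you need Noetherian stabilization of $Z_L$ to make it uniform. This is essentially the route of the earlier version described in the paper's last section; the explicit word makes it unnecessary.

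\emph{One $V$ for all $E$.} The paper combines the Quantitative Algebraic Geometry Lemma with an iterative perturbation on $k+1$ blocks, where $k$ is bounded by the dimension of a space of bounded-degree polynomials. You replace this with a semialgebraic fiber-dimension count (Hardt triviality) on only $N+1$ blocks, which directly yields an open dense set.

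\emph{Trade-off.} The paper's route is effective (explicit $T'$, and even an explicit malleable $V$ in the last section) and uses only linear algebra and Noetherianity. Yours is more conceptual and much more economical in blocks: your Step~1 combined with Lemma~\ref{existence} already gives $T=(N+1)(4N+2)$. The price is algebraic-group and semialgebraic machinery; for $N\ge 3$, Goursat also has to be used in its reduction-to-pairs form, which is easiest at the Lie algebra level.

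\emph{Formula slip.} The right-translated derivative is $\partial_{V_k}\Phi_j\,\Phi_j^{-1}=-M_{j,k}B(A^{(E_j-V_k)})^{-1}M_{j,k}^{-1}=-M_{j,k}\begin{pmatrix}0&1\\0&0\end{pmatrix}M_{j,k}^{-1}$. You dropped the factor $(A^{(E_j-V_k)})^{-1}$. The matrix $M_{j,k}BM_{j,k}^{-1}$ has trace $1$, so no identification of tangent spaces rescues your formula. The correct element is nilpotent and independent of $j$, so Step~3 goes through with $(B,\dots,B)$ replaced by it.

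\emph{Connectedness.} $G$ need not be connected, and you do not need it to be. The $\mathrm{Ad}$-stabilizer of a subspace is an algebraic group, so it contains the Zariski closure of $G$ as soon as it contains $G$.

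\emph{The step you flag as delicate.} It is short. The span $W$ of all $X_k$ over all words is $\mathrm{Ad}(g(v))$-invariant. By invertibility and finite dimensionality it is therefore $\mathrm{Ad}(G)$-invariant, hence invariant under the Zariski closure, hence an ideal. An ideal of $\sl(2,\R)^N$ with nonzero projections onto every factor is everything. Now let $V^*$ be a word whose span $W_{V^*}$ has maximal dimension. For any word $P$, concatenation gives $W_{(P,V^*)}=W_{V^*}+\mathrm{Ad}(\Phi(E,V^*))W_P$. Maximality forces $\mathrm{Ad}(\Phi(E,V^*))W_P\subset W_{V^*}$ for every $P$, so $W_{V^*}\supset\mathrm{Ad}(\Phi(E,V^*))W=\sl(2,\R)^N$.
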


We do not know whether there exists $T \geq 1$ such that all
$V \in \R^T$ are $N$-malleable.

\begin{definition}
We say that $V \in \ell^\infty(\Z)$ is $\infty$-\emph{malleable} if for every $N \geq 1$, there
exist some $T_N \geq 1$ and $V^N \in \R^{T_N}$ such that $V^N+t$ is $N$-malleable for every $t \in \Q^{T_N}$, and there exists a finite subset $Q_N \subset V^N+\Q^{T_N}$ such that the set $L_N$ of $l \in \Z$ with
$(V_{l+1},...,V_{l+T_N}) \in Q_N$ is unbounded from below and from above.
\end{definition}

%large in the following sense: for every
%$\epsilon>0$ there exists $C_\epsilon>0$ such that for every
%$n \in \Z$ there exists $m \in L_N$ such that $|m-n|<\epsilon
%|n|+C_\epsilon$.

\begin{remark}\label{r.malleable}
It follows from Theorem~\ref{malleable} that the $\infty$-malleable $V \in \ell^\infty(\Z)$ are dense. 
Indeed, suppose that $V \in \ell^\infty(\Z)$ and $\varepsilon > 0$ are given. 
%Since $V$ is bounded, for each $N \ge 1$, there are balls $B^N_\pm$ in $\R^N$ of radius, say, $1$  and sequences $\ell^N_{k,\pm} \to \pm \infty$ such that the sets $\{ \ell^N_{k,\pm}, \ldots, \ell^N_{k,\pm}+N-1\}$ are mutually disjoint (as $k, N \in \N$ and the $\pm$ sign are being varied) and $(V(\ell^N_{k,\pm}),\ldots,V(\ell^N_{k,\pm}+N-1)) \in B^N_\pm$. 
For each $N \ge 1$, choose $T_N \ge 1$ via Theorem~\ref{malleable} such that a generic $V^N \in \R^{T_N}$ is $N$-malleable. Since the translate of a generic set is generic, we preserve genericity if we subtract a fixed $t \in \Q^{T_N}$ from $V^N$. Intersecting these countably many generic sets, we obtain that every $V^N$ in a generic subset $S^N \subseteq \R^{T_N}$ is such that $V^N+t$ is $N$-malleable for every $t \in \Q^{T_N}$. Fix an arbitrary element $V^N$ of $S^N$. 
For each $N \ge 1$, choose sequences $\ell^N_{k,\pm} \to \pm \infty$ such that the sets $\{ \ell^N_{k,\pm}, \ldots, \ell^N_{k,\pm}+T_N-1\}$ are mutually disjoint (as $k, N \in \N$ and the $\pm$ sign are being varied).
The claim follows upon choosing a suitable finite subset $F^N$ of $V^N + \Q^{T_N}$ that is $\varepsilon$-dense in $\{ v = (v_1,\ldots,v_N) \in \R^{T_N} : \max_j |v_j| \le \|V\|_\infty \}$ and replacing the strings $(V(\ell^N_{k,\pm}),\ldots,V(\ell^N_{k,\pm}+T_N-1)))$ by suitable $\varepsilon$-close elements of $F^N$. Since the sets $\{ \ell^N_{k,\pm}, \ldots, \ell^N_{k,\pm}+T_N-1\}$ are mutually disjoint and the modifications on them are bounded in size uniformly by $\varepsilon$, it follows that the $V' \in \ell^\infty(\Z)$ resulting from these modifications is $\infty$-malleable and obeys $\|V' - V\|_\infty \le \varepsilon$.

\end{remark}

We will need the following result, which follows from the work \cite{GK24} of Gorodetski-Kleptsyn.\footnote{Theorem~\ref{localization} does not appear in this formulation in \cite{GK24}, but it follows from their work; see specifically Proposition~3.8, Definition~4.3, and the discussion above Lemma~4.6 in \cite{GK24}.}

\begin{theorem} \label {localization}

For every $V \in \ell^\infty(\Z)$, for every $r>0$, for almost every $t
\in \{0,r\}^\Z$ with respect to the Bernoulli measure,
$V+t$ has pure point spectrum and the eigenfunctions are exponentially
localized with semi-uniform decay.  More precisely, there exists
$c>0$ such that for any
eigenfunction $\phi$, for every $\delta>0$, there exists
$C_{\delta,\phi}$ such that $u_k=\begin{pmatrix} \phi_k \\
\phi_{k-1} \end{pmatrix}$ satisfy $\|u_{\pm (n+m)}\| \leq
C_{\delta,\phi} e^{\mp (c m-\delta n)} \|u_{\pm n}\|$ for $n,m \geq 0$.

\end{theorem}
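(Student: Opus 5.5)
The statement is attributed to Gorodetski--Kleptsyn \cite{GK24}, so the plan is to realize $H_{V+t}$, $t$ Bernoulli on $\{0,r\}^\Z$, as a non-stationary Anderson model and verify the hypotheses of their localization theorem. Concretely, the single-site distributions are $\mu_n=\frac12(\delta_{V(n)}+\delta_{V(n)+r})$. Since $\|V\|_\infty<\infty$, these lie in a compact family of probability measures on $\R$, and each is non-degenerate with fixed gap $r$.

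\textbf{Step 1 (uniform positivity of Lyapunov exponents).} I will invoke the non-stationary Furstenberg theorem from \cite{GK24}. This requires that for each $E$ in a compact energy interval and each $n$, the random matrices $A^{(E-V(n)-t_n)}$ satisfy a uniform non-degeneracy condition: no common invariant finite set of directions, and no compact subgroup. The check is that the two matrices $A^{(E-V(n))}$ and $A^{(E-V(n)-r)}$ differ by $\begin{pmatrix} r&0\\0&0\end{pmatrix}$. A standard argument, as for the Bernoulli Anderson model, shows the group generated by products over two consecutive sites is not contained in a compact group and preserves no finite set of lines. The key point is that this holds uniformly over the compact family of parameters $(V(n),V(n+1),E)$. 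The output is a uniform exponential growth rate $\ge \gamma>0$ for $\|\Phi_{[a,b]}(E)\|$, holding with large-deviation estimates uniform in $a$, $b$ and $E$ (their Proposition~3.8).

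\textbf{Step 2 (from large deviations to localization).} I will follow the standard scheme \cite{GK24} uses. The large deviation estimates hold uniformly in energy, and there is a parameter-exclusion argument (their Definition~4.3 of the ``bad'' events). Together these give a Borel--Cantelli statement: almost surely, for all sufficiently large $n$, every energy is ``good'' on at least one of the scales near $\pm n$. Consequently every generalized eigenfunction of polynomial growth decays exponentially. Schnol's theorem then gives pure point spectrum.

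\textbf{Step 3 (the semi-uniform bound).} This is the step I expect to be the main obstacle, since it requires extracting a quantitative statement from \cite{GK24} rather than merely citing it. The rate $c$ must be chosen independent of the eigenfunction; I take $c$ slightly below $\gamma$. The estimate must hold for transfer from $n$ to $n+m$ with a loss of only $e^{\delta n}$. The plan is to use the discussion above their Lemma~4.6: almost surely, for every $\delta>0$ there is $n_0$ such that for $n\ge n_0$ every block $[n,n+m]$ is ``good up to $e^{\delta n}$'' at every energy. On good blocks, the solution through an eigenvalue must follow the contracting direction, which gives $\|u_{n+m}\|\le e^{-cm+\delta n}\|u_n\|$. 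The finitely many $n<n_0$ are absorbed into $C_{\delta,\phi}$. The negative half-line is handled symmetrically.
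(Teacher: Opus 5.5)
Your reduction is the paper's. The paper gives no argument beyond pointing to \cite{GK24}: $V+t$ is a non-stationary Anderson model whose single-site laws $\frac12(\delta_{V(n)}+\delta_{V(n)+r})$ range over a compact family of non-Dirac measures, and positivity, large deviations and localization are taken from that work. There is a small inaccuracy in Step~1. In the non-stationary setting the non-degeneracy hypothesis must hold for each law of the compact family separately. Non-compactness plus strong irreducibility of generated groups does not suffice when the laws vary. This is also why one passes to two-step products: every $A^{(s)}$ sends the direction of $e_2$ to that of $e_1$. For two-point laws the two-step condition follows directly from $(A^{(s')})^{-1}A^{(s)}=\begin{pmatrix}1&0\\ s'-s&1\end{pmatrix}$.

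The genuine gap is in Step~3, the step you flagged. You propose to extract the following: almost surely, for each $\delta>0$ there is an $n_0$, independent of the energy, such that for $n\ge n_0$ every block $[n,n+m]$ is good at every energy, and hence $\|u_{n+m}\|\le e^{-cm+\delta n}\|u_n\|$ for every eigenfunction. This statement is false. Suppose it held with $n_0$ and the constant independent of $\phi$. Take $n=n_0$, $m=x-n_0$, and sum over an orthonormal eigenbasis (which exists once the spectrum is pure point). This gives $1=\sum_\phi|\phi(x)|^2\le e^{-2c(x-n_0)+2\delta n_0}\sum_\phi\|u_{n_0}\|^2=2e^{-2c(x-n_0)+2\delta n_0}$, which fails for large $x$. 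Some eigenfunctions are centered arbitrarily far out, and past $n_0$ they grow before they decay. So the threshold must depend on $\phi$, which is why the statement only asks for $C_{\delta,\phi}$. Energy-uniform information can only enter in a two-block form: no energy is resonant on two far-apart blocks of suitable position-dependent sizes. Your Step~2 phrasing (``every energy is good on at least one of the scales'') has the same defect.

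One correct way to get the bound uses three inputs. First, the deterministic growth sequence satisfies $L_{n+m}(E)-L_n(E)\ge\gamma m-C$ uniformly in $E$; this follows from uniform positivity and almost additivity in the non-stationary Furstenberg theory. Second, the energy-uniform upper bound $\|S^{(E)}_{0,n}\|\le e^{L_n(E)+\epsilon n}$ holds for $n\ge n_1$. Third, each eigenfunction decays at the full rate, $\|u_n\|\le e^{-L_n(E)+\epsilon n}$ for $n\ge n_2(\phi)$. Since $\|u_0\|\le\|S^{(E)}_{0,n}\|\,\|u_n\|$, you get $\|u_{n+m}\|\le \|u_0\|^{-1}e^{C}e^{-(\gamma-\epsilon)m+2\epsilon n}\|u_n\|$ for $n\ge\max(n_1,n_2)$. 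This is the claim with $c=\gamma/2$ and $\epsilon=\min(\delta,\gamma)/2$. The finitely many smaller $n$ are absorbed into $C_{\delta,\phi}$ because $u_n\neq 0$. Exponential decay at some rate below the growth rate would not be enough here. The upper and lower exponential rates of $\|u_n\|$ must agree up to $o(n)$ for the loss to be subexponential in $n$.
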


Let us note the following (much easier) result can be proved directly, but it is also an obvious consequence of the work \cite{GK24} of Gorodetski-Kleptsyn:

\begin{lemma} \label {growing}

For $\lambda>0$, for every $N \geq 1$, for every $\epsilon>0$, there exist $\delta>0$, $C>0$, such that for every $m \geq 1$ and for every $E \in \R^N$, $V \in \R^m$ with $\|E\|_\infty, \|V\|_\infty < \lambda$, there exists $V' \in \R^m$ which is
$\epsilon$-close to $V$ such that $\|\Phi^{N,m}_j(E,V')\| \geq C e^{\delta m}$, $1 \leq j \leq N$.

\end{lemma}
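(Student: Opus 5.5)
Fix $\lambda>0$, $N\ge1$, $\epsilon>0$. The plan is a direct argument in the uniformly hyperbolic regime, by perturbing the potential in blocks. The basic mechanism is that if $V'_k$ is large compared with $|E_j|+2$, then $A^{(E_j-V'_k)}$ is strongly hyperbolic. However, we may only move $V$ by $\epsilon$, so large values cannot be created directly. Instead we will use a fixed-length block construction.

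\emph{Step 1: local growth on a fixed block.} We find $L$ (depending only on $\lambda,N,\epsilon$) and a modification on a block of length $L$, within the $\epsilon$-ball, such that the product over the block has norm at least $2$ for every $j$, \emph{and} maps a common cone field into itself uniformly. For a single energy, note that $t\mapsto A^{(t)}$ has derivative $\begin{pmatrix}1&0\\0&0\end{pmatrix}$, which does not preserve directions. So a product $A^{(E-V_L)}\cdots A^{(E-V_1)}$, as a function of $(V_1,\dots,V_L)\in\prod[V_k-\epsilon,V_k+\epsilon]$, cannot be elliptic (rotation-like) of bounded norm on the whole box. The cleaner route is a compactness argument: the parameter set $\{(E,V): \|E\|_\infty,\|V\|_\infty\le\lambda\}$ restricted to blocks of length $L$ is compact. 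For each parameter, a Kotani/Furstenberg-type nonconstancy argument shows that the random product with $V_k$ replaced by $V_k+\epsilon\omega_k$, $\omega_k$ i.i.d.\ uniform in $[-1,1]$, has positive Lyapunov exponent uniformly for $E$ in a compact set. Here Furstenberg's theorem applies because the generated group is noncompact and strongly irreducible. To handle the fact that the $V_k$ vary, we use the continuity of Lyapunov exponents in the distribution, which is the content of the Gorodetski--Kleptsyn nonstationary Furstenberg theory. This yields, with probability close to $1$, growth $\ge e^{2\delta L}$ simultaneously for all $N$ energies, so some deterministic choice works.

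\emph{Step 2: concatenation.} Growth of individual blocks does not immediately give growth of the product, because cancellation between blocks is possible. To control it, we use the large-deviation estimate from \cite{GK24}: for the random perturbation on the whole string of length $m$, one has $\P(\log\|\Phi_j\|<\delta m)\le Ce^{-cm}$ for each $j$, uniformly in $(E,V)$ in the compact parameter set. A union bound over $j=1,\dots,N$ gives a realization $V'=V+\epsilon\omega$ with $\|\Phi^{N,m}_j(E,V')\|\ge e^{\delta m}$ for all $j$ once $m\ge m_0$. For $m<m_0$, we take $C$ small, e.g.\ $C=e^{-\delta m_0}$, and use $\|\cdot\|\ge1$ for $\SL(2,\R)$ matrices.

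\emph{Main obstacle.} The key difficulty is uniformity of the large-deviation constants over all nonstationary sequences $V$ with $\|V\|_\infty<\lambda$ and over all $E$. This needs the uniform positivity and continuity provided by \cite{GK24}: the measures $\mathrm{law}(E_j-V_k-\epsilon\omega_k)$ range over a compact family of absolutely continuous measures, and for such a family the group generated is never compact or reducible. We take this uniform nonstationary large-deviation theorem as the input.
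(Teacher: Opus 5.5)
Your Step 2 is essentially the paper's route: the paper gives no separate argument, only noting that the lemma is an obvious consequence of Gorodetski--Kleptsyn \cite{GK24}, and your argument is the natural way to extract that consequence (random perturbation $V'=V+\epsilon\omega$, a uniform nonstationary large-deviation bound over the compact family of shifted uniform laws, a union bound over $j$, and $C=e^{-\delta m_0}$ for small $m$). Step 1 is superfluous (its cone-field and ellipticity claims are unjustified but never used), and strictly the nonstationary hypothesis of \cite{GK24} should be checked on two-step products $A^{(t_2)}A^{(t_1)}$ rather than via a ``generated group'' condition, since every single step $A^{(t)}$ sends the direction of $(0,1)^T$ to that of $(1,0)^T$.
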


For $V \in \ell^\infty(\Z)$, $m,n \in \Z$, let $S^{(E,V)}_{m,n}$ be the transfer matrix from $m$ to $m+n$, so $S^{(E,V)}_{m,1}=A^{(E-V_m)}$ and $S^{(E,V)}_{m+k,l} S^{(E,V)}_{m,k}=S^{(E,V)}_{m,k+l}$.  For an interval
$[m,m+n-1] \subset \Z$, $n \geq 1$, we will also use the notation $S^{(E,V)}_I=S^{(E,V)}_{m,n}$.

Notice that if $V$ is $\infty$-malleable and $Q \subset \Q$ is finite, then for every $t \in Q^\Z$, $V+t$ is $\infty$-malleable.

By Remark~\ref{r.malleable} and Theorem~\ref{localization}, there exists a dense subset of $\infty$-malleable potentials that have pure point spectrum with exponentially localized eigenfunctions with semi-uniform decay. Let $V$ be such a potential.  For $\epsilon>0$ we will now show how to $\epsilon$-perturb $V$ to obtain a potential of $\epsilon$-discrete type.

Fix finitely many eigenvalues $E_1<...<E_N$ such that the spectral measure (for $H_V$) of $\{E_1,...,E_N\}$ is larger than $1-\epsilon$.  Let $E=(E_1,...,E_N)$.  Let $\phi_j=(\phi_{j,k})_{k \in \Z} \in \ell^\infty(\Z)$ be
the corresponding $\ell^2$-normalized eigenfunctions, $1 \le j \le N$.  We denote $u_{j,k}=\begin{pmatrix} \phi_{j,k} \\ \phi_{j,k-1} \end{pmatrix}$.  Particularly we have
\begin{equation} \label {4}
\|u_{j,n}\| \leq C_0 e^{-c |n|}, \quad n \in \Z.
\end{equation}

Let $T=T_N$, $Q=Q_N$ and $L=L_N$ be associated to $V$ as in the definition of a $\infty$-malleable potential.

The construction will depend on a sequence of parameters chosen subsequently; first some small $\rho>0$, then some $m_1 \geq 1$ large, then some $m_2 \geq 1$ large.

We will decompose the non-negative integers $\Z^{\geq 0}$ into several blocks. There is a starting block $I^+_s$, a malleable block $I^+_m$, a decay block $I^+_d$, and two alternating infinite sequences of buffer blocks $I^+_{b,i}$ and hyperbolic blocks $I^+_{h,i}$, $i \geq 0$, defined as follows. Let $l_+ \in L$ be minimal with $l_+ \geq m_2$, $m_+=l_+ +[\rho l_+]$. Then set $I^+_s=[0,l_+]$, $I^+_m=[l_+ +1,l_+ +T]$,
$I^+_d=[l_+ + T + 1,m_+]$, $I^+_{b,i}=[m_+ +i m_1+1,m_+ +i m_1+2]$, $I^+_{h,i}=[m_+ +i m_1+3,m_+ +(i+1) m_1]$, $i \geq 0$.

We perturb $V$ to $V'$ on $\Z^{\geq 0}$ separately on each block as follows:

\begin{itemize}

\item We do not perturb on the starting and decay blocks.

\item On a hyperbolic block $I^+_{h,i}$, $i \geq 0$, we perturb so that $S^{(E_j,V')}_{I^+_{h,i}}$ is large, $1 \leq j \leq N$.  This is possible by Lemma \ref {growing}. The largeness depends on the allowed perturbation size $\epsilon$ and $m_1$, and improves by taking $m_1$ larger.

\item On a buffer block $I^+_{b,i}$, $i \geq 1$, we perturb so that the most contracted direction by
the inverse of $S^{(E_j,V')}_{I^+_{h,i-1}}$ is taken by $S^{(E_j,V')}_{I^+_{b,i}}$ to a
direction making angle at least $\rho$ with the most contracted direction by
$S^{(E_j,V')}_{I^+_{h,i}}$, $1 \leq j \leq N$. This is possible since the length of the buffer block is $2$ and (compare the discussion in \cite[Remark~1.7]{GK21}) $\inf \{ \frac{d}{dt} \mathrm{Arg} S^{(E,V-t)}_{m,2} v : m \in \Z, \, \|E\|_\infty, \|V\|_\infty < \lambda , \, v \in \R^2 \setminus \{0\} \}> 0$.

\end{itemize}

Since $m_1$ is chosen after $\rho$, the largeness of matrices associated to
hyperbolic blocks compensates for the angle estimate and allows hyperbolic
concatenation.  We conclude that there exist directions $s^+_j \in
\P\R^2$, $1 \leq j \leq N$, such that
\begin{equation} \label {1}
\|S^{(V',E_j)}_{m_+ +3,n} \cdot w\| \leq C_1 e^{-\delta_1 n} \|w\|,
\quad n \geq 1, \quad w \in s^+_j.
\end{equation}

\begin{itemize}

\item On the buffer block $I^+_{b,0}$ we perturb so that the most contracted
direction by the inverse of
$S^{(E_j,V')}_{I^+_d}$ is sent by
$S^{(E_j,V')}_{I^+_{b,0}}$ to a
direction making angle at least $\rho$ with $s^j_+$, $1 \leq j \leq N$.

\end{itemize}

For $1 \leq j \leq N$, let
$\tilde s^+_j$ be the image of $s^+_j$ by the inverse of
$S^{(E_j,V')}_{I^+_{b,0}} S^{(E_j,V')}_{I^+_d}$.

The semi-uniform property in Theorem \ref {localization} implies that a decay block produces exponential contraction of the eigenfunctions, in the sense that $|u_{j,m_+ +1}| \leq e^{-c \rho l_+/2} |u_{j,l_+ +T+1}|$ (provided $m_2$ is sufficiently large).  This implies that the direction of $u_{j,l_+ +T+1}$ must be exponentially (in $\rho m_2$) close to the most contracted direction of $S^{(E_j,V')}_{I^+_d}$, which is in turn exponentially close to $\tilde s^j_+$ (by the choice of the buffer block $I^+_{b,0}$).

\begin{itemize}

\item We conclude that if $m_2$ is sufficiently large, then we can perturb in the malleable block $I^+_m$ so that
$S^{(E,V')}_{I^+_m}$ takes $u_{j,l_+ +1}$ to a vector in $\tilde s^+_j$, $1 \leq j \leq N$.

\end{itemize}

Let $u'_{j,n}=S^{(E_j,V')}_{0,n} u_{j,0}$, $n \geq 0$. Since we did not change $V$ on $I^+_s$, we have
\begin{equation} \label {5}
u'_{j,n}=u_{j,n}, \quad 0 \leq n
\leq l_+ + 1.
\end{equation}
The changes in $I^+_m$ are so that $u'_{j,l_+ +T+1}$ belongs to $\tilde s^+_j$, so that $u'_{j,m_+ +3}$ belongs to $s^+_j$.

Using (\ref {4}) and (\ref {5}) together with
the trivial bound $\|u'_{j,n+1}\| \leq C_2 \|u'_{j,n}\|$ and the choice
of small $\rho$ we get
\begin{equation}
\|u'_{j,n}\| \leq e^{-c n/3}, \quad 
l_+ +1 \leq n \leq m_+ +3.
\end{equation}
Together with (\ref {1}) we get, with $c'=\min \{c/4,\delta_1/2\}$,
\begin{equation} \label {7}
\|u'_{j,n}\| \leq e^{-c' n}, \quad 
n \geq l_+ +1.
\end{equation}

We now proceed similarly to perturb $V$ on the negative integers $\Z^{<0}$. 
Let $l_- \in L$ be maximal with $l_-+T+1 \leq -m_2$,
$m_-=l_- -[\rho l_+]$.
Then set $I^-_s=[l_-,-1]$, $I^-_m=[l_- -T,l_- -1]$,
$I^-_d=[m_-,l_- -T-1]$, $I^-_{b,i}=[m_- -i m_1-2,m_- -i m_1-1]$,
$I^-_{h,i}=[m_- -(i+1) m_1,m_- -i m_1-3]$.

\begin{itemize}

\item We do not perturb on starting and decay blocks.

\item On the hyperbolic blocks we perturb to make $S^{(E_j,V')}_{I^-_{h,i}}$
large.

\item On the buffer blocks with $i \neq 0$ we perturb so that the most contracted direction by
the inverse of $S^{(E_j,V')}_{I^-_{h,i}}$ is taken by $S^{(E_j,V')}_{I^-_{b,i}}$ to a
direction making angle at least $\rho$ with the most contracted direction by
$S^{(E_j,V')}_{I^-_{h,i-1}}$.

\end{itemize}

Hyperbolic concatenation allows us then
to define $s^-_j \in \P \R^2$ satisfying
\begin{equation} \label {1-}
\|S^{(V',E_j)}_{m_- -2,-n} \cdot w\| \leq C_1 e^{-\delta_1 n} \|w\|,
\quad n \geq 1, \quad w \in s^-_j.
\end{equation}

\begin{itemize}

\item On the buffer block with $i=0$ we perturb so that the most contracted direction by $S^{(E_j,V')}_{I^-_d}$ is sent by the inverse of $S^{(E_j,V')}_{I^-_{b,0}}$ to a direction making angle at least $\rho$ with $s^-_j$.

\end{itemize}

We can then conclude that the image of $s^-_j$ by $S^{(E_j,V')}_{I^-_d} S^{(E_j,V')}_{I^-_{b,0}}$ is a direction $\tilde s^-_j$ exponentially close to the most contracted direction by the inverse of $S^{(E_j,V')}_{I^-_d}$,
which is exponentially close to the direction of $u_{j,l_- -T}$.  

\begin{itemize}

\item We then perturb on the malleable block so that $S^{(E_j,V')}_{I^-_b}$ takes $\tilde s^-_j$ to the direction of $u_{j,l_-}$.

\end{itemize}

The definition of $V'$ has now been determined on the whole of $\Z$. We can then define $u'_{j,n} = S^{(E_j,V')}_{0,n} u_{j,0}$ for $n<0$ as well.  We can then get as before
\begin{equation} \label {5-}
u'_{j,n}=u_{j,n}, \quad l_- \leq n<0,
\end{equation}
\begin{equation} \label {7-}
\|u'_{j,n}\| \leq e^{c' |n|}, \quad 
n \leq l_- -1.
\end{equation}

We can write $u'_{j,n}=\begin{pmatrix} \phi'_j \\ \phi'_{j-1} \end{pmatrix}$ where $H_{V'} \phi'_j=E_j \phi_j$, where $\phi'_j \in \ell^2(\Z)$ with $\|\phi_j-\phi'_j\|^2_{\ell^2(\Z)} \leq C' e^{-2 c' m_2}$.

This implies that the $V'$-spectral measure and the $V$-spectral measure of $E_j$ are close, $1 \leq j \leq N$.  To conclude that $V'$ is of $\epsilon$-discrete type it is thus enough to show that the $E_j$ are not
in the essential spectrum of $H_{V'}$. By general theory, this is equivalent to the existence of non-zero
vectors $w^\pm$ that decay exponentially uniformly under
$S^{(E_j,V')}_{0,\pm n}$.\footnote{One direction follows from the
Combes-Thomas estimate, which gives uniform exponential decay of the
Green function, and hence of the Weyl solutions, for energies outside
the spectrum; compare \cite[Theorem~2.5.1]{DF22}. Note that discrete
eigenvalues can be removed by a small local perturbation under which
the essential spectrum is invariant. In the other direction, one can
again break a possible coincidence of $w^\pm$ via a local perturbation
and then express the resolvent using the pair of
Weyl solutions.}  Here by uniform decay we mean that $\|S^{(E_j,V')}_{0,\pm (n+m)}
w^\pm\| \leq C' e^{-c' m} \|S^{(E_j,V')}_{0,\pm n} w^\pm\|$ for $n,m \geq
0$.  By (\ref {1}) and (\ref {1-}), this is precisely what we get with $w^+=w^-$, the direction of $u'_{j,0}$.
\qed

\medskip

Thus, Theorem~\ref{t.2} has been proved modulo the proof of Theorem~\ref{malleable}.

\section{Effective Perturbations}\label{sec.3}

In this section we prove Theorem~\ref{malleable}. Let
$\cA=\cA^{N,T} \subset \R^N \times \R^T$ be the set of all
$(E^*,V^*)$ such that $V \mapsto \Phi(E^*,V)$ is not a submersion at $V^*$.
For $E \in \R^N$, let $\cA_e(E)=\cA^{N,T}_e(E) \subset \R^T$ be the set of
all $V$ such that $(E,V) \in \cA$.  For $V \in \R^T$, let
$\cA_v(V)=\cA^{N,T}_v(V) \subset \R^N$ be the set of all $E$ such that
$(E,V) \in \cA$.

Clearly $\cA$, $\cA_e$ and $\cA_v$ are all real algebraic subsets.
Moreover, $\cC \subset \cA_v$.  We will repeatedly use the following
concatenation property.  Let $T_j$, $1 \leq j \leq k$, and let
$T = \sum_{j=1}^k T_j$.  Then for every $V^j \in \R^{T_j}$,
if $V=(V^1,...,V^k) \in \R^T$, then $\cA^{N,T}_v(V)
\subset \bigcap_{j=1}^k \cA^{N,T_j}_v(V^j)$.

Our main goal is to prove the following:

\begin{theorem} \label {generic}

For every $N \geq 1$, there exists $T \geq 1$ such that for a dense set of
$V \in \R^T$, $\cA^{N,T}_v(V)=\cC^N$.

\end{theorem}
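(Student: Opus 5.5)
We would prove Theorem~\ref{generic} by a dimension-reduction argument built on the concatenation property $\cA^{N,T}_v(V)\subset\bigcap_j \cA^{N,T_j}_v(V^j)$. The target $\SL(2,\R)^N$ has dimension $3N$, so for $T\ge 3N$ the submersion condition at $(E,V)$ says that the $3N$ vectors $\partial_{V_k}\Phi_j(E,V)\,\Phi_j(E,V)^{-1}\in\sl(2,\R)$ span. For $V=(V^1,\dots,V^k)$, the condition "$E\in\cA_v(V)$" is cut out by the vanishing of all $3N\times 3N$ minors. These minors are polynomial in $(E,V)$. The plan is to show inductively that by concatenating blocks we can lower $\dim(\cA_v(V)\setminus\cC)$ step by step until that set is empty. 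We argue on the level of real algebraic sets, so all dimensions are semialgebraic dimensions.

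\textbf{Step 1 (local spanning away from $\cC$).} First we show that for each fixed $E\notin\cC$ there is some $T$ and some $V$ with $E\notin\cA_v(V)$. The derivative in $V_k$ of the product is $\Phi_j$ conjugated by a partial product, applied to the generator $\begin{pmatrix}0&0\\-1&0\end{pmatrix}$ (up to sign). So the span of the derivatives is the span of the $N$-tuples
\[
\bigl(\mathrm{Ad}_{P_{j,k}}X\bigr)_{j=1}^N, \qquad P_{j,k}=A^{(E_j-V_k)}\cdots A^{(E_j-V_1)}.
\]
Since the $E_j$ are distinct, the matrices $A^{(E_j-t)}$ with different $j$ differ. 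The subgroup of $\SL(2,\R)^N$ generated by the tuples $(A^{(E_j-t)})_j$, $t\in\R$, should then be Zariski dense, i.e.\ project onto each factor with no diagonal-type coupling between factors. Coupling is excluded because $A^{(a)}(A^{(b)})^{-1}$ is unipotent with parameter $a-b$, and this parameter differs across $j$. A Lie-algebra-generation argument then shows that the $\mathrm{Ad}$-orbit of $(X,\dots,X)$ spans $\sl(2,\R)^N$.

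\textbf{Step 2 (genericity).} Because $\cA_e(E)$ is algebraic in $V$, once it is a proper subset it is nowhere dense. Hence for each fixed $E\notin\cC$ the set of admissible $V$ is open and dense.

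\textbf{Step 3 (uniformity in $E$ via concatenation).} Let $Z_1=\cA_v(V^1)\setminus\cC$ for a generic $V^1$; this is a proper algebraic subset of $\R^N$ minus $\cC$. Each irreducible component $W$ of its Zariski closure not contained in $\cC$ has a generic point $E\notin\cC$. By Step 2 a generic block $V^2$ satisfies $E\notin\cA_v(V^2)$, and therefore $W\not\subset\cA_v(V^2)$. Countably many generic conditions (one for each component, picking points, or rather for all components simultaneously since there are finitely many) give
\[
\dim\bigl(\cA_v(V^1,V^2)\setminus\cC\bigr)<\dim\bigl(\cA_v(V^1)\setminus\cC\bigr).
\]
After at most $N+1$ concatenations the set $\cA_v(V)\setminus\cC$ is empty, with $T=(N+1)T_0$. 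The admissible $V$ form a dense (indeed generic) set, since each step imposes only the avoidance of nowhere dense algebraic sets. We must also handle unboundedness: we work with the Zariski closures, so the sets are genuinely algebraic and no compactness is needed.

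\textbf{Main obstacle.} The main obstacle is Step 1: showing that the tangent vectors span all of $\sl(2,\R)^N$ for every $E\notin\cC$, and not merely for generic $E$. The difficulty is to rule out invariant proper subalgebras of $\sl(2,\R)^N$. By Goursat-type lemmas these are products of subalgebras and graphs of automorphisms between factors. We would first try to show that an $\mathrm{Ad}$-invariant subspace containing $(X,\dots,X)$ that is a graph $\sl_i\cong\sl_j$ would force $\mathrm{Ad}$ of $A^{(E_i-t)}$ and $A^{(E_j-t)}$ to agree for all $t$, via a fixed automorphism. Differentiating in $t$ gives the same unipotent element in both factors, while the difference $(E_i-E_j)$ enters the conjugation nontrivially; this forces $E_i=E_j$.
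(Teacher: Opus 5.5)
There is a genuine gap in Step~1. That step carries the real content of the theorem (it is the paper's Lemma~\ref{existence}), and the one concrete reason you give for it is false. With $u_x=\begin{pmatrix}1&x\\0&1\end{pmatrix}$ and $g_t=(A^{(E_j-t)})_{j=1}^N$ one has $A^{(a)}(A^{(b)})^{-1}=u_{a-b}$, hence $g_tg_s^{-1}=(u_{s-t},\dots,u_{s-t})$. The parameter $(E_j-t)-(E_j-s)=s-t$ is the same in every factor, so these elements lie in the diagonal copy of $\SL(2,\R)$ and are perfectly compatible with a diagonal coupling. Likewise, $A^{(E_i-t)}\neq A^{(E_j-t)}$ only rules out coupling through the identity automorphism. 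A graph-type coupling of two factors is, up to the center, conjugation by a fixed matrix, so you need a conjugation invariant that separates the factors.

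The trace works: $\mathrm{tr}\,A^{(E_j-t)}=E_j-t$, and a coupling of factors $i\neq j$ would force $(E_i-t)^2=(E_j-t)^2$ for all $t$, i.e.\ $E_i=E_j$. Your closing paragraph can also be completed: the common $(u_x,\dots,u_x)$ forces the coupling to be conjugation by some $u_y$, and $u_yA^{(E_i)}u_{-y}=\pm A^{(E_j)}$ forces $y=0$ and $E_i=E_j$. As written, though, it is only a plan. Add surjectivity of each one-factor projection (it contains all $u_x$ and $A^{(E_j)}=u_{E_j}A^{(0)}$, hence $A^{(0)}$ and the lower unipotents) and Goursat's lemma for products of simple groups, and you get the Zariski density you want. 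The paper uses the same invariant, but explicitly and uniformly in $E$. For $V\equiv v$ on a block, the derivatives are Krylov vectors $L(E_j-v)^kc$. On the invariant plane $F(E_j-v)$ the operator $L(E_j-v)$ satisfies $x^2-((E_j-v)^2-2)x+1=0$. So the condition $2v\neq E_i+E_j$ for all $i,j$ makes these planes genuine with pairwise disjoint spectra, the Krylov space is $\prod_jF(E_j-v)$, and a second constant block with value $v'\neq v$ supplies the missing directions.

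Even granting Zariski density, two steps are missing. First, spanning by $\{\mathrm{Ad}_gX\}$ over the whole semigroup is not the same as spanning by the nested partial products of a single $V$. This is repairable: the span over the semigroup equals the span over its Zariski closure, since $g\mapsto\mathrm{Ad}_gX$ is polynomial. You can then build one word greedily, each appended sub-word pushing a new conjugate of $X$ out of the current span. Second, Step~1 yields a length depending on $E$, whereas Step~3 uses a fixed $T_0$. This is repaired by noting that $B_T=\{E:\cA^{N,T}_e(E)=\R^T\}=\bigcap_{V\in\R^T}\cA^{N,T}_v(V)$ is algebraic and decreasing in $T$ by concatenation, with $\bigcap_TB_T=\cC$. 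So $B_T=\cC$ for large $T$ by the Noetherian property.

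Once $T_0$ is uniform, your Step~3 is correct and is a genuinely different, more economical route than the paper's. By destroying every irreducible component not contained in $\cC$ simultaneously, you force the dimension to drop at each stage, so $N+1$ blocks suffice. The paper instead removes a single point $E^l$ at a time and obtains only a strictly decreasing chain of unknown length. It therefore needs the Quantitative Algebraic Geometry Lemma to locate, among a fixed number $k+1$ of blocks, a redundant one that can be re-perturbed without enlarging $\cW^l$. Your version costs the dimension theory of real algebraic sets (a proper algebraic subset of an irreducible one has strictly smaller dimension) in place of the purely linear-algebraic bound on $k$.
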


Note that if $\cA^{N,T}_v(V)=\cC^N$, then $V$ is $N$-malleable.
Moreover the property of $\cA^{N,T}_v(V)=\cC^N$ is $G_\delta$,\footnote
{Given a compact set $K \subset \R^N$, the set of $V$ such that
$\cA^{N,T}_v(V) \cap K=\emptyset$ is open.}
so Theorem \ref {generic} implies Theorem \ref {malleable}.

The main step is to establish:

\begin{lemma} \label {existence}

For every $N \geq 1$, there exists $T' \geq 1$ such that for every $E \in \R^N \setminus \cC^N$,
$\cA^{N,T'}_e(E) \neq \R^N$.
%\marginpar{Should the quantifier on $N$ be made explicit? ``For every $N \in \N$, there exists $T' \geq 1$ such that for every $E \in \R^N \setminus \cC^N$,
%$\cA^{N,T'}_e(E) \neq \R^N$.''}

\end{lemma}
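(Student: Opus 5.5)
The plan is to turn non-submersivity into a linear-algebra condition in $\sl(2,\R)^N$, prove that condition fails for some $V$ using a Zariski-density argument, and then make $T'$ uniform in $E$ by Noetherianity. (The conclusion should read $\cA^{N,T'}_e(E)\neq\R^{T'}$.) Fix $E\in\R^N\setminus\cC^N$. Write $A_{j,k}=A^{(E_j-V_k)}$ and $P_{j,k}=A_{j,k}\cdots A_{j,1}$, with $P_{j,0}=\mathrm{id}$. Since $\partial_{V_k}A^{(E_j-V_k)}=-\begin{pmatrix}1&0\\0&0\end{pmatrix}$, left-translating to the Lie algebra gives
$$\Phi_j^{-1}\,\partial_{V_k}\Phi_j=-P_{j,k}^{-1}\begin{pmatrix}1&0\\0&0\end{pmatrix}P_{j,k-1}=\mathrm{Ad}(P_{j,k-1}^{-1})\,\mathfrak n ,\qquad \mathfrak n=\begin{pmatrix}0&0\\1&0\end{pmatrix}.$$
This uses $A_{j,k}^{-1}\begin{pmatrix}1&0\\0&0\end{pmatrix}=-\mathfrak n$. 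Hence $V\notin\cA^{N,T}_e(E)$ if and only if the vectors
$$w_k(V)=\bigl(\mathrm{Ad}(P_{1,k-1}^{-1})\mathfrak n,\dots,\mathrm{Ad}(P_{N,k-1}^{-1})\mathfrak n\bigr),\qquad 1\le k\le T,$$
span $\sl(2,\R)^N$, which has dimension $3N$. The vector $w_k$ depends only on $V_1,\dots,V_{k-1}$. So appending entries to $V$ only adds vectors, and the maximal rank $d(T,E)=\max_V\operatorname{rank}\{w_k\}_{k\le T}$ is nondecreasing in $T$.

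Next I show that $d(T,E)=3N$ for some $T$. Let $U$ be the span of all $w_k(V)$ over all $T$ and all $V$. Using $P_{j,k}=A_{j,k}P_{j,k-1}$ and concatenation, $U$ is invariant under $\mathrm{Ad}(g^{-1})$, acting diagonally, for every $g$ in the group $G_E\subset\SL(2,\R)^N$ generated by the elements $g_v=(A^{(E_1-v)},\dots,A^{(E_N-v)})$, $v\in\R$. It also contains $(\mathfrak n,\dots,\mathfrak n)$. I will then show that $G_E$ is Zariski dense in $\SL(2,\R)^N$. Each projection is dense, since the matrices $A^{(t)}$, $t\in\R$, generate $\SL(2,\R)$. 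By Goursat's lemma for the simple factor $\sl(2)$, failure of density would give $i\neq j$ and an automorphism $\alpha$ of $\SL(2)$ with $\alpha(A^{(E_i-v)})=\pm A^{(E_j-v)}$ for all $v$. Taking traces gives $E_i-v=\pm(E_j-v)$ for all $v$, hence $E_i=E_j$, which is excluded since $E\notin\cC$. I would also check a product of two generators to handle the sign ambiguity. Under Zariski density, $\sl(2,\R)^N$ is a direct sum of $N$ pairwise non-isomorphic irreducible $G_E$-modules (the factors). A submodule containing a vector with all components nonzero is therefore everything, so $U=\sl(2,\R)^N$. Because $U$ is a finite-dimensional span, it is attained by finitely many $w_k$. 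All these vectors occur along a single concatenated $V$ by the monotonicity above, so $d(T(E),E)=3N$.

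Uniformity in $E$ comes from Noetherianity. Let $Z_T=\{E:\ d(T,E)<3N\}$. This is the set of $E$ for which all $3N\times3N$ minors of the $w_k$ vanish identically as polynomials in $V$, so $Z_T$ is a real algebraic set. By monotonicity, $Z_{T+1}\subset Z_T$. A descending chain of algebraic sets stabilizes, say at $T'$, so $Z_{T'}=\bigcap_T Z_T$. By the previous paragraph this intersection is contained in $\cC^N$. Hence for every $E\notin\cC^N$ some $V\in\R^{T'}$ is a submersion point. The main obstacle is the Zariski-density/Goursat step: excluding a twisted diagonal relation between two coordinates needs care with signs and outer automorphisms. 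If the trace argument proves insufficient, I would fall back on a direct computation of iterated $\mathrm{Ad}$-orbits of $(\mathfrak n,\dots,\mathfrak n)$, expanded as polynomials in the $V_k$, and use a Vandermonde argument in the distinct $E_j$.
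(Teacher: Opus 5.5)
Your route differs from the paper's and most of it is sound, but one step fails as written: ``All these vectors occur along a single concatenated $V$ by the monotonicity above.'' Monotonicity only says that the vectors of a prefix survive when you append entries. Suppose the finitely many spanning vectors are $w_{k_r}(V^{(r)})$ for different strings $V^{(r)}$. In a concatenation $(V^{(1)},V^{(2)},\dots)$ the $r$-th piece contributes $\mathrm{Ad}(Q_r^{-1})\,w_k(V^{(r)})$, where $Q_r=\Phi(E,V^{(r-1)})\cdots\Phi(E,V^{(1)})$. It does not contribute $w_k(V^{(r)})$ itself, and in general no single string has all the required prefix products.

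The conclusion is still true, by a maximal-rank argument using only what you already have. Since $d(T,E)$ is nondecreasing and bounded, $d(T,E)=d$ for all $T\ge T_0$. Pick $V^0\in\R^{T_0}$ of rank $d$ and let $W$ be the span of its vectors. For any string $V''$, the string $(V^0,V'')$ has rank at most $d$ and its span contains $W$, so its span equals $W$. Its later vectors are $\mathrm{Ad}(\Phi(E,V^0)^{-1})\,w_k(V'')$, so $\mathrm{Ad}(\Phi(E,V^0)^{-1})\,U\subseteq W$, hence $d=\dim U=3N$.

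With this repair the rest works:
\begin{itemize}
\item The derivative formula is correct.
\item In the Goursat step the sign causes no trouble. Algebraic automorphisms of $\SL_2$ are conjugations by $\mathrm{GL}_2$, so $E_j-v=\pm(E_i-v)$, and any single $v\neq(E_i+E_j)/2$ forces $E_i=E_j$. For $N>2$ you are implicitly using the Goursat--Ribet form, applied to the Lie algebra of the identity component of the Zariski closure.
\item The Noetherian stabilization of $Z_T$ correctly makes $T'$ independent of $E$.
\end{itemize}

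For comparison, the paper stays entirely within linear algebra. It takes the explicit string of $2N+1$ copies of $v$ followed by $2N+1$ copies of $v'$, with $v\neq v'$ and $2v,2v'\notin\mathcal E+\mathcal E$ for $\mathcal E=\{E_j\}$.
\begin{itemize}
\item On a constant block your vectors are $L^{\mathcal E}(v)^{k}c^{\mathcal E}$.
\item In each factor, $L(e-v)$ preserves the plane $F(e-v)$ spanned by $(L-\mathrm{id})c$ and $(L^2-L)c$.
\item The conditions $E_i\neq E_j$ and $2v\notin\mathcal E+\mathcal E$ make the eigenvalues on different planes distinct. Hence $(L^{\mathcal E}(v)-\mathrm{id})c^{\mathcal E}$ is cyclic on $\prod_e F(e-v)$.
\item The second block adds $L^{\mathcal E}(v)^{2N+1}\prod_e F(e-v')$, and since $F(e-v)\neq F(e-v')$ the two planes fill each $\sl(2,\R)$.
\end{itemize}
This gives $T'=4N+2$ for all $E$ at once, with no algebraic-group theory and no Noetherian step.

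Your argument is more conceptual: it uses Zariski density of the group generated by the $g_v$ and irreducibility of the adjoint action, and $E_i\neq E_j$ enters through a trace comparison rather than eigenvalue separation. It gives no explicit $T'$. Structurally it resembles the earlier version of the proof described in the paper's last section, where an $E$-dependent $T$ is made uniform by the Algebraic Geometry Lemma.
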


\subsection{Proof of Lemma \ref {existence}}

\def\id{\mathrm{id}}
\def\cE{\mathcal{E}}

For $\alpha \in \R$, let $L(\alpha):\sl(2,\R) \to \sl(2,\R)$ be the linear
operator given by $L(\alpha) \cdot z=\begin{pmatrix} \alpha&-1\\1&0
\end{pmatrix}^{-1} z \begin{pmatrix} \alpha&-1\\1&0
\end{pmatrix}$.

Let $c=\begin{pmatrix} 0&0\\1&0 \end{pmatrix}$. We note that $(L(\alpha)^3-L(\alpha)^2) \cdot c=(\alpha^2-2)
(L(\alpha)^2-L(\alpha)) \cdot c-(L(\alpha)-\id) \cdot c$. Let $F(\alpha)$ be the space generated by $(L(\alpha)-\id) \cdot c$ and $(L(\alpha)^2-L(\alpha)) \cdot c$.  If $\alpha \neq 0$, then $F(\alpha)$ is a two dimensional subspace invariant under $L(\alpha)$.

Let now $\cE \subset \R$ be a finite non-empty subset and let $v \in \R$.  Let $L^\cE(v):\sl(2,\R)^\cE \to \sl(2,\R)^\cE$ be given by $L^\cE_e(v) \cdot z=L(e-v) \cdot z_e$, $e \in \cE$.

\begin{lemma} \label {constant}

Let $\cE \subset \R$ be a finite non-empty subset.  Let $v \in \R$ be such
that $2 v \notin \cE+\cE$.
Let $F^\cE(v) \subset \sl(2,\R)^\cE$ be the vector space generated by
$(L^\cE(v)^{j+1}-L^\cE(v)^j) \cdot c^\cE$, $0 \leq j \leq 2 \#\cE-1$.  Then
$F^\cE(v)=\prod_{e \in \cE} F(e-v)$.%\marginpar{Change $F(e-V)$ to $F(e-v)$?}

\end{lemma}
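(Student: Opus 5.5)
The plan is to reduce the statement to the standard fact that a direct sum of cyclic operators with pairwise coprime minimal polynomials is again cyclic. Write $\alpha_e=e-v$ for $e\in\cE$ and $w^e_j=(L(\alpha_e)^{j+1}-L(\alpha_e)^j)\cdot c$, so that $(L^\cE(v)^{j+1}-L^\cE(v)^j)\cdot c^\cE=(w^e_j)_{e\in\cE}$.

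First, the hypothesis $2v\notin\cE+\cE$ applied with $e=f$ gives $2v\neq 2e$, so $\alpha_e\neq 0$ for every $e\in\cE$. By the remark preceding the lemma, each $F(\alpha_e)$ is then two dimensional and $L(\alpha_e)$-invariant, with basis $w^e_0,w^e_1$. Since $w^e_{j+1}=L(\alpha_e)\cdot w^e_j$, every $w^e_j$ lies in $F(\alpha_e)$. This gives the inclusion $F^\cE(v)\subseteq\prod_{e}F(\alpha_e)$. The right-hand side has dimension $2\#\cE$, which is exactly the number of generators, so it remains to prove that the generators are linearly independent, or equivalently that they span.

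Let $M=\bigoplus_e L(\alpha_e)|_{F(\alpha_e)}$ acting on $W=\prod_e F(\alpha_e)$, and let $w_0=(w^e_0)_e$. Then the generators are $M^jw_0$ for $0\le j\le 2\#\cE-1$. These span $W$ if and only if $w_0$ is a cyclic vector for $M$, i.e.\ its $M$-annihilating polynomial has degree $2\#\cE$. The identity $(L^3-L^2)c=(\alpha^2-2)(L^2-L)c-(L-\id)c$ says that on $F(\alpha_e)$ the operator $L(\alpha_e)$ satisfies $x^2-(\alpha_e^2-2)x+1$. Since $w^e_0$ and $w^e_1=L(\alpha_e)w^e_0$ are independent, $w^e_0$ is cyclic in $F(\alpha_e)$, and its annihilating polynomial is exactly $p_e(x)=x^2-(\alpha_e^2-2)x+1$. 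The annihilating polynomial of $w_0$ is the least common multiple of the $p_e$. It therefore has degree $2\#\cE$ precisely when the $p_e$ are pairwise coprime.

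The key step is to check this coprimality. The roots of $p_e$ are a pair $\lambda,\lambda^{-1}$ with $\lambda+\lambda^{-1}=\alpha_e^2-2$, and $0$ is never a root. So if $p_e$ and $p_f$ share a root $\lambda$, then $\alpha_e^2-2=\lambda+\lambda^{-1}=\alpha_f^2-2$, i.e.\ $(e-v)^2=(f-v)^2$. For $e\neq f$ this forces $e-v=-(f-v)$, that is $e+f=2v$, contradicting the hypothesis $2v\notin\cE+\cE$. Hence the $p_e$ are pairwise coprime.

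Consequently, the annihilating polynomial of $w_0$ has degree $2\#\cE$, the vectors $M^jw_0$ for $0\le j\le 2\#\cE-1$ span $W$, and $F^\cE(v)=\prod_{e\in\cE}F(e-v)$. No step is a serious obstacle; the only point needing care is this coprimality argument, which is where the hypothesis on $v$ is used.
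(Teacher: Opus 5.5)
Your proof is correct and follows the same route as the paper. In both, $(L(e-v)-\mathrm{id})\cdot c$ is cyclic in each two-dimensional block $F(e-v)$, and the hypothesis $2v\notin\mathcal{E}+\mathcal{E}$ forces the blocks to have pairwise disjoint spectra (your coprimality of the polynomials $x^2-((e-v)^2-2)x+1$), which makes $(L^{\mathcal{E}}(v)-\mathrm{id})\cdot c^{\mathcal{E}}$ cyclic on $\prod_{e\in\mathcal{E}}F(e-v)$; you additionally spell out details the paper leaves implicit, such as deducing $e\neq v$ from the case $e=f$ of the hypothesis.
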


\begin{proof}

The conditions imply that if $e,e' \in \cE$ are distinct,
$\lambda$ is an eigenvalue of $L(e-v)|F(e-v)$,
and $\lambda'$ is an eigenvalue of $L(e'-v)|F(e'-v)$, then $\lambda \neq
\lambda'$.  Moreover, $(L(e-v)-\id) \cdot c$ is cyclic for $L(e-v)|F(e-v)$. 
It follows that $(L^\cE(v)-\id) \cdot c^\cE$ is cyclic for
$L^\cE(v)|\prod_{e \in \cE} F(e-v)$.
\end{proof}

\begin{lemma} \label {two values}

Let $\cE \subset \R$ be a finite non-empty subset.  Let $v,v' \in \R$ be
such that $2 v,2 v' \notin \cE+\cE$ and $v \neq v'$.  Then
$F^\cE(v)+L^\cE(v)^{2 \#\cE+1}(v)) \cdot F^\cE(v')=\sl(2,\R)^\cE$.

\end{lemma}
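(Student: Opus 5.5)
The plan is to reduce the statement to a single energy, using the product structure from Lemma~\ref{constant}, and then to check a simple fact about two planes in $\sl(2,\R)$.

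\emph{Step 1 (reduction).} Since $2v,2v' \notin \cE+\cE$, Lemma~\ref{constant} applies to both $v$ and $v'$. It gives $F^\cE(v)=\prod_{e\in\cE}F(e-v)$ and $F^\cE(v')=\prod_{e\in\cE}F(e-v')$. The operator $L^\cE(v)$ acts componentwise. Hence, with $k=2\#\cE+1$,
$$F^\cE(v)+L^\cE(v)^{k}\cdot F^\cE(v')=\prod_{e\in\cE}\bigl(F(e-v)+L(e-v)^{k}\cdot F(e-v')\bigr).$$
So it suffices to fix $e$ and set $\alpha=e-v$, $\alpha'=e-v'$. Then $\alpha\neq\alpha'$, and both are nonzero because $2v\neq 2e$ and $2v'\neq 2e$. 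We must show $F(\alpha)+L(\alpha)^k\cdot F(\alpha')=\sl(2,\R)$.

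\emph{Step 2 (invariance reduces to distinctness).} As noted before Lemma~\ref{constant}, $F(\alpha)$ is a two-dimensional $L(\alpha)$-invariant subspace. Since $L(\alpha)$ is invertible, this gives $L(\alpha)^{-k}F(\alpha)=F(\alpha)$. Thus $L(\alpha)^kF(\alpha')\subset F(\alpha)$ would force $F(\alpha')\subset F(\alpha)$, and since both are planes, $F(\alpha')=F(\alpha)$. Two distinct planes in the three-dimensional space $\sl(2,\R)$ sum to the whole space. So it remains to show $F(\alpha)\neq F(\alpha')$ whenever $\alpha\neq\alpha'$ are nonzero.

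\emph{Step 3 (identify $F(\alpha)$).} Write $A=A^{(\alpha)}$, so that $L(\alpha)$ is $z\mapsto A^{-1}zA$, the adjoint action of $A^{-1}$. This action preserves the trace form $\langle X,Y\rangle=\mathrm{tr}(XY)$. It also fixes the traceless part of $A$,
$$a(\alpha)=\begin{pmatrix}\alpha/2&-1\\1&-\alpha/2\end{pmatrix}.$$
A direct computation gives
$$(L(\alpha)-\id)\cdot c=\begin{pmatrix}\alpha&-1\\ \alpha^2-1&-\alpha\end{pmatrix},\qquad \mathrm{tr}\bigl(((L(\alpha)-\id)\cdot c)\,a(\alpha)\bigr)=\tfrac{\alpha^2}{2}-1-(\alpha^2-1)+\tfrac{\alpha^2}{2}=0.$$
Because $L(\alpha)$ preserves $\langle\cdot,\cdot\rangle$ and fixes $a(\alpha)$, the image $L(\alpha)\cdot\bigl((L(\alpha)-\id)\cdot c\bigr)=(L(\alpha)^2-L(\alpha))\cdot c$ is also orthogonal to $a(\alpha)$. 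Hence $F(\alpha)\subset a(\alpha)^\perp$. The trace form is nondegenerate on $\sl(2,\R)$, so $a(\alpha)^\perp$ is a plane. Since $\dim F(\alpha)=2$ for $\alpha\neq0$, we get $F(\alpha)=a(\alpha)^\perp$.

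\emph{Step 4 (conclusion).} The off-diagonal entries of $a(\alpha)$ are $-1,1$ for every $\alpha$, so $a(\alpha)$ and $a(\alpha')$ are proportional only if they are equal, that is, only if $\alpha=\alpha'$. By nondegeneracy, distinct lines have distinct orthogonal planes, so $F(\alpha)\neq F(\alpha')$. Combined with Steps~1 and~2, this proves the lemma.

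The only nontrivial point is Step~3, where $F(\alpha)$ is recognized as the Killing-orthogonal complement of the fixed line of $\mathrm{Ad}(A^{-1})$. This turns the two-plane comparison into comparing the explicit matrices $a(\alpha)$ and $a(\alpha')$.
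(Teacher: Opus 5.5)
Your proof is correct and follows the paper's route: use the $L^\cE(v)$-invariance of $F^\cE(v)$ to discard the factor $L^\cE(v)^{2\#\cE+1}$, reduce to single energies via Lemma~\ref{constant}, and check $F(e-v)+F(e-v')=\sl(2,\R)$ for each $e\in\cE$. The paper leaves that last check as a ``direct computation''. Your identification of $F(\alpha)$ as the trace-form orthogonal complement of the traceless part of $A^{(\alpha)}$ is a clean way to carry it out, and it is sound; I checked, for example, that $(L(\alpha)-\id)\cdot c=\begin{pmatrix}\alpha&-1\\ \alpha^2-1&-\alpha\end{pmatrix}$ is orthogonal to $a(\alpha)$.
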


\begin{proof}

Since $F^\cE(v)$ is $L^\cE(v)$ invariant,
it is enough to show that $F^\cE(v)+F^\cE(v')=\sl(2,\R)^\cE$.  By Lemma \ref
{constant}, it is enough to show that $F(e-v)+F(e-v')=\sl(2,\R)$,
$e \in \cE$, which follows from
$v \neq v'$, $e \neq v$ and $e \neq v'$ by direct computation.
\end{proof}

We can now finish the proof of Lemma \ref {existence}.  Let $E \in \R^N$ and
let $\cE=\{E_j\}_{j=1}^N$.  Let $v,v' \notin \cE+\cE$ be
distinct.  Let $T'=4N+2$, and
let $V \in \R^{T'}$ be given by $V_i=v$, $1 \leq i \leq 2N+1$ and $V_i=v'$,
$2N+2 \leq i \leq 4N+2$.

Let $\Psi:\R^{4N+2} \to \sl(2,\R)^N$ be the linear operator given by
\begin{equation}
\Psi_i=\Phi^{N,4N+2}_i(E,V)^{-1} \partial_V \Phi^{N,4N+2}_i(E,V).
\end{equation}
Then
\begin{equation}
\Psi_i \cdot w=\sum_{j=1}^{2 N+1} w_j L(E_i-v)^{j-1} \cdot
c+w_{j+2N+1} L(E_i-v)^{2N+1} L(E_i-v')^{j-1} \cdot c,
\end{equation}
$1 \leq i \leq N$.  The result
follows from Lemma \ref {two values}.
\qed

\subsection{Proof of Theorem \ref {generic}}

We will need some elementary algebraic geometry; compare,
for example, \cite{H03}. The following is well known (it is the Noetherian
property of polynomial rings over a field, and it is the
result that guarantees that the Zariski topology is a topology):

\begin{lemma}[Algebraic Geometry Lemma]

Let $n \geq 1$ and $p_\lambda$, $\lambda \in \Lambda$, be a non-empty family of polynomials $\C^n \to \C$ and let $\cW=\bigcap_{\lambda \in \Lambda} p_\lambda^{-1}(0)$.  Then there exists a finite $\Lambda_* \subset \Lambda$ such that $\cW=\bigcap_{\lambda \in \Lambda_*} p_\lambda^{-1}(0)$.

\end{lemma}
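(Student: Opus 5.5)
We prove the Algebraic Geometry Lemma from Hilbert's basis theorem, the statement that every ideal of $\C[x_1,\dots,x_n]$ is finitely generated (the ring is Noetherian because $\C$ is a field).

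First, let $I \subset \C[x_1,\dots,x_n]$ be the ideal generated by the family $\{p_\lambda\}_{\lambda \in \Lambda}$. Its elements are the finite combinations $\sum q_\lambda p_\lambda$ with polynomial coefficients $q_\lambda$. Every such combination vanishes on $\cW$. Conversely, any point where all elements of $I$ vanish lies in $\cW$, since each $p_\lambda \in I$. So $\cW$ is exactly the common zero set $Z(I)$.

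Next, apply Hilbert's basis theorem to get finitely many generators $g_1,\dots,g_k$ of $I$. Each $g_i$ lies in $I$, so it is a finite combination of finitely many $p_\lambda$. Let $\Lambda_* \subset \Lambda$ be the union of the indices appearing in these $k$ expressions; it is finite. If $\Lambda_*$ happens to be empty (when every $g_i = 0$, i.e.\ $I = 0$), we add one arbitrary index, which is possible because $\Lambda \neq \emptyset$.

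Finally, we compare the zero sets. Let $I_*$ be the ideal generated by $\{p_\lambda\}_{\lambda \in \Lambda_*}$. Then $I_* \subset I$. Each $g_i$ lies in $I_*$, so $I \subset I_*$, and therefore $I_* = I$. By the same reasoning as in the first step, $\bigcap_{\lambda \in \Lambda_*} p_\lambda^{-1}(0) = Z(I_*) = Z(I) = \cW$.

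The only nontrivial input is Hilbert's basis theorem itself, which we cite as standard (see \cite{H03}). An equivalent route is to say that ascending chains of ideals stabilize: adjoin generators $p_\lambda$ one at a time and stop once the ideal stops growing. Nothing in the argument uses the complex field specifically, so the same conclusion holds for real polynomials. This is the form used for the real algebraic sets $\cA_v$ appearing in the proof of Theorem~\ref{generic}.
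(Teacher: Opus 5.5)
Your proof is correct and follows the paper's route: the paper gives no separate argument and simply invokes the Noetherian property of $\C[x_1,\dots,x_n]$ (Hilbert's basis theorem), and you have spelled out the routine passage from a finite generating set of the ideal to a finite subfamily $\Lambda_*$, including the harmless empty-family edge case. Your remark that the argument works verbatim over $\R$ is also correct, and this real version is what the paper uses for the real algebraic sets $\cW^l$ in the proof of Theorem~\ref{generic}.
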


We will need a refinement for the case of bounded degree:

\def\span{\mathrm {span}}

\begin{lemma}[Quantitative Algebraic Geometry Lemma]

Let $n,d \geq 1$ and let $p_\lambda$, $\lambda \in \Lambda$, be a non-empty family of polynomials $\C^n \to \C$ with degree bounded by $d$, and let $\cW=\bigcap_{\lambda \in \Lambda} p_\lambda^{-1}(0)$.  Then there exists a finite $\Lambda_* \subset \Lambda$ such that $\cW=\bigcap_{\lambda \in \Lambda_*} p_\lambda^{-1}(0)$.  Moreover $\# \Lambda_* \leq k(n,d)$.

\end{lemma}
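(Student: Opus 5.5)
The plan is to reduce to linear algebra on the finite-dimensional space of polynomials of degree at most $d$ in $n$ variables. Let $P_d \subset \C[z_1,\dots,z_n]$ be the complex vector space of polynomials of degree at most $d$. It has dimension $D=\binom{n+d}{n}$, so we will aim for the bound $k(n,d)=D$.

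First, let $U=\span\{p_\lambda : \lambda \in \Lambda\} \subset P_d$. This is a subspace of a $D$-dimensional space, so $\dim U \leq D$. We choose a basis of $U$ consisting of elements of the spanning family, say $p_{\lambda_1},\dots,p_{\lambda_r}$ with $r=\dim U \leq D$. This is possible because any spanning set of a finite-dimensional space contains a basis, obtained by greedily adding elements that are not in the span of those already chosen. The family is non-empty, so $r \geq 1$ unless every $p_\lambda$ is zero. In that degenerate case we take a single $\lambda$, which gives $\cW=\C^n$ as required. We then set $\Lambda_*=\{\lambda_1,\dots,\lambda_r\}$.

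Next we verify that $\cW=\bigcap_{\lambda \in \Lambda_*} p_\lambda^{-1}(0)$. The inclusion $\subseteq$ is trivial. For the reverse, let $z$ satisfy $p_{\lambda_i}(z)=0$ for all $i$. Each $p_\lambda$ is a linear combination $\sum_i a_i p_{\lambda_i}$. Evaluation at $z$ is linear, so $p_\lambda(z)=\sum_i a_i p_{\lambda_i}(z)=0$ for every $\lambda \in \Lambda$, which puts $z$ in $\cW$. This gives the statement with $\#\Lambda_* \leq \binom{n+d}{n}=k(n,d)$.

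No step here is a real obstacle. The only point needing care is to notice that the bounded-degree hypothesis places everything inside a fixed finite-dimensional space. That makes the Noetherian argument of the previous lemma unnecessary, because the linear span already determines the common zero set. The bound is also independent of $\Lambda$, which is exactly the uniformity presumably needed later, when $\Lambda$ ranges over parameters such as $V$ or $E$.
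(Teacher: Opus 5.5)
Your proof is correct and is essentially the paper's argument. The paper likewise passes to the span of the family inside the space of polynomials of degree at most $d$, extracts a subfamily of size $\dim \mathrm{span} \leq \frac{(n+d)!}{n!\,d!}$ with the same span, and concludes; the only difference is that it phrases the last step through the ideal identity $I(Z)=I(\mathrm{span}(Z))$, whereas you use linearity of evaluation directly on the zero sets.
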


\begin{proof}

We thank Andrei Mandelshtam for the following argument; compare \cite{M25}.  Let $K$ be a field.
For $Z \subset K[x_1,...,x_n]$, let $\span(Z)$ be the vector space generated
by $Z$ and let $I(Z)$ be the ideal generated by $Z$.
Then $I(Z)=I(\span(Z))$.  In particular $I(Z)=I(Z_*)$ for any $Z_* \subset Z$
with $\span(Z)=\span(Z_*)$, so we can select $Z_*$ with cardinality
$\dim(\span(Z))$.  If $Z$ consists of polynomials of degree at most
$d$, $\span(Z)$ has dimension bounded by $\frac {(n+d)!} {n! d!}$.
\end{proof}

Let $T'$ be as in Lemma \ref {existence}. Note that the sets
$\cA^{N,T'}_v(V)$ are given as the solution of a polynomial equation of
degree bounded in terms of $N$ and $T'$.  By the Quantitative Algebraic
Geometry Lemma, there exists $k$ such that for any non-empty family
$V_\lambda \in \R^{T'}$, $\lambda \in \Lambda$, there exists a
subfamily $\Lambda_*$ with $\#\Lambda_* \leq k$ such that
$\bigcap_{\lambda \in \Lambda}
\cA^{N,T'}_v(V_\lambda)=\bigcap_{\lambda \in \Lambda_*}
\cA^{N,T'}_v(V_\lambda)$.
Let now $T=(k+1) T'$ and let $V=(V_1,...,V_{k+1}) \in \R^T$ with
$V_1,...,V_{k+1} \in \R^{T'}$.  For every $\epsilon>0$,
let $D_j$ be the $\epsilon$-ball around $V_j$.  Let us show
that there exists $V'_j \in D_j$
such that $V'=(V'_1,...,V'_{k+1})$ satisfies $\cA^{N,T}_v(V')=\cC^N$.
By the concatenation property, it is enough to show that
$\bigcap_{j=1}^{k+1} \cA^{N,T'}_v(V'_j)=\CC^N$.  Let us define
inductively a sequence $V^l_j \in D_j$, $1 \leq j \leq k+1$, starting at
$l=0$ and until some $L$ such that $\cW^L=\cC^N$ where
$\cW^l= \bigcap_{j=1}^{k+1} \cA^{N,T'}_v(V^l_j)$.
%\marginpar{Replace $\cW^l= \bigcap_{j=1}^{k+1} \cA^{N,T'}_v(V^l_j)=\cC^N$
%with $\cW^l= \bigcap_{j=1}^{k+1} \cA^{N,T'}_v(V^l_j)$.}
Then we can set $V'_j=V^L_j$ to conclude.

First define $V^0_j=V_j$, $1 \leq j \leq k+1$.  Assume that $V^l_j$,
$1 \leq j \leq k+1$ are defined for some $l \geq 0$.  If $\cW^l=\cC^N$
we set $L=l$ and stop.  Otherwise let $E^l \in \cW^l \setminus \cC^N$.
%\marginpar{Replace $E^l \in \cW^j \setminus \cC^N$ with $E^l \in \cW^l
%\setminus \cC^N$.}
By the choice of $k$, there exists $1 \leq j_l \leq k+1$ such that
$\cW^l=\bigcap_{j \neq j_l}
\cA^{N,T'}_v(V^l_j)$.  By Lemma \ref {existence}, the real algebraic set
$\cA_e^{N,T'}$ is dense in $\R^N$.  Let $V^{l+1}_{j_l} \in D_{j_l}
\setminus \cA^{N,T'}_e(E^l)$
%\marginpar{Replace $D_j \setminus \cA^{N,T'}_e(E^l)$ with $D_{j_l}
%\setminus \cA^{N,T'}_e(E^l)$.}
and let $V^{l+1}_j=V^l_j$ for $j \neq j_l$. Then $\cW^{l+1}
\subset \cW^l \setminus \{E^l\}$.

Since the $\cW^l$ form a strictly decreasing sequence of
algebraic sets, the Algebraic Geometry Lemma implies that
this procedure must terminate in finite time.
\qed

\section{Proof of Theorem \ref{t.3}}\label{sec.4}

In this section we prove Theorem \ref{t.3}. Since for any $V \in \ell^\infty(\Z)$, $\sigma_\mathrm{ess}(H_V)$ is compact, it needs to be shown that for generic $V$, $\sigma_\mathrm{ess}(H_V)$ has empty interior and contains no isolated points. As pointed out above, the first property is a consequence of Theorem~\ref{t.2}. Thus, it remains to show that, generically, $\sigma_\mathrm{ess}(H_V)$ cannot have any isolated points.

For $V \in \ell^\infty(\Z)$, define
$$
s(V) := \sup_{E \in \sigma_\mathrm{ess}(H_V)} \inf_{E' \in \sigma_\mathrm{ess}(H_V) \setminus \{E\}} |E-E'|.
$$
Clearly, $s(V) \ge 0$ and we have $s(V) = 0$ if and only if  $\sigma_\mathrm{ess}(H_V)$ has no isolated points.  Thus our goal is to show that $\{ V \in \ell^\infty(\Z) : s(V) = 0 \}$ is a dense $G_\delta$ set.

It suffices to show that for every $\delta > 0$, $S_\delta := \{ V \in \ell^\infty(\Z) : s(V) < \delta \}$ is open and dense.

We first show that $S_\delta$ is open. It follows from the characterization of the essential spectrum via Weyl sequences that the essential spectrum is $1$-Lipschitz continuous in the Hausdorff metric with respect to $\ell^\infty(\Z)$ perturbations of the potential. Thus, given $V \in \ell^\infty(\Z)$ with $s(V) < \delta$, we claim that for $\varepsilon \in (0, \frac12(\delta - s(V) - \gamma))$, with $\gamma > 0$ small enough that the interval is non-trivial, we have $s(V+W) < \delta$ for every $W$ with $\|W\|_\infty < \varepsilon$. Suppose this fails and there is a $W$ with $\|W\|_\infty < \varepsilon$ and $s(V+W) \ge \delta$. Then there is $E \in \sigma_\mathrm{ess}(H_{V+W})$ such that $\inf_{E' \in \sigma_\mathrm{ess}(H_{V+W}) \setminus \{E\}} |E-E'| \ge s(V+W) - \gamma \ge \delta - \gamma$. But then there is $\tilde E \in \sigma_\mathrm{ess}(H_{V})$ such that 
$$
\inf_{E' \in \sigma_\mathrm{ess}(H_{V}) \setminus \{\tilde E\}} |\tilde E-E'| > \delta - \gamma - (\delta - s(V) - \gamma) = s(V),
$$
contradiction.

Now we show that $S_\delta$ is dense. Let $V \in \ell^\infty(\Z)$ with $s(V) \ge \delta$ (otherwise there is nothing to do) and $\varepsilon > 0$ be given. Since the essential spectrum is bounded, there can be only finitely many isolated points of $\sigma_\mathrm{ess}(H_V)$ with separation at least $\delta$, denote them by $E_1, \ldots, E_\ell$. For all other isolated points of $\sigma_\mathrm{ess}(H_V)$, the separation will be uniformly bounded away from $\delta$, say by $2 \varepsilon > 0$ (otherwise make $\varepsilon$ smaller to accomplish this). Let us construct a perturbation $W$ with $\|W\|_\infty < \varepsilon$, that lowers the separation of $E_1, \ldots, E_\ell$ to a value strictly less than $\delta$. Since the others cannot be lifted to $\delta$ by such a perturbation, it will then follow that $s(V+W) < \delta$. For each of the $E_j$ choose finitely supported Weyl vectors $\psi_{j,k}$ so that their supports are mutually disjoint. For $k$ even, add a small constant value of $W$ (less than $\min \{ \varepsilon, \delta\}$) on the support of $\psi_{j,k}$ with the sign chosen so that the perturbed energy enters the former gap, and for $k$ odd leave $V$ unchanged on that set. This makes the separation of each $E_j$ smaller than $\delta$.\qed

\section{Comments on Effective Perturbations}

With slightly more work the proof of Lemma \ref {existence} yields an
explicit construction of some $V \in \R^T$ such that $\cA^{N,T}_v(V)=\cC^N$.
%\marginpar{$\cA^{N,T}_v(V)$}
Let $T=2(N^2+1)(2N+1)$, and let $V \in \R^T$ be given by
$V_j=v_{[(j-1)/(2N+1)]}$, where $v_i \in \R$, $0 \leq i \leq 2N^2+1$ are
all distinct. For any $E \in \R^N \setminus \cC^N$, in order to check
that $E \notin \cA^{N,T}_v(V)$, we can split $V$ into $N^2+1$ blocks
of length $4N+2$ and for each $E \in \R^N$ at least one of them will
work for the argument, so we can conclude using the concatenation property.

Of course $\cA^{N,T}_e(E)=\R^N$ for every $E \in \R^N$ if $T<3N$.
We have reasonable computer evidence that one can take $T'=3N$ in Lemma \ref
{existence} (our proof gives only $T'=4N+2$).

Indeed $\cA^{N,3N}$ is given
in terms of a single simple equation.
For $(E,V) \in \R^N \times \R^{3N}$, consider the $3N \times 3N$ matrix
$M^N(E,V)=(m^N_{i,j}(E,V))_{1 \leq i,j \leq 3 N}$ such that for
$1 \leq i \leq N$ and $1 \leq j \leq 3N$, $m^N_{3i-2,j}(E,V)$,
$m^N_{3i-1,j}(E,V)$ and $m^N_{3i,j}(E,V)$ are the
upper left, upper right, and lower left coefficients of
$(\Phi^{N,3N}_i)^{-1}(E,V) \partial_j \Phi^{N,3N}_i(E,V)$, where
$\partial_j$ denotes the derivative with respect to the $j$-th coordinate
of $V$. Then $\det M^N(E,V)$ is a polynomial in $E$ and $V$ and its zero
set is $\cA^{N,3N}$.

We have reasonable ($1 \leq N \leq 11$) computer evidence of the following.
For $V=(1,\ldots,3N)$, $\det M^N(E,V+t)$ is a polynomial of degree
$2N^2-N$ in $t$ with coefficients that are polynomials in $E$, and
the coefficient of $t^{2N^2-N}$ is $\prod_{1 \leq i<j \leq N} 4 i
(E_i-E_j)^3$.  If this could be established in general, we could
conclude Lemma \ref {existence} with the optimal $T'=3N$.

In a previous version of this work, we established Lemma \ref {existence} by
leveraging the case $N=2$ (established by the computer calculation) to prove
by induction an abstract result about periodic Schr\"odinger operators:
for every $E \in \R^N \setminus \cC^N$ there exists a periodic Schr\"odinger
operator such that $f:(\P\R^2)^N \to (\P\R^2)^N$ given by
$(x_i)_{i=1}^N \mapsto (f_i(x_i))_{i=1}^N$ is a minimal dynamical system,
where $f_i:\P\R^2 \to \P\R^2$ the projective action of
the monodromy matrix with energy $E_i$.  This result was then used to get
$\cA^{N,T}_e(E) \neq \R^N$ for some $T$ depending on $E$, from which
Lemma \ref {existence} can be concluded using the Algebraic Geometry Lemma.

The proof given in this version is an adaptation of our proof of the
analogue of Lemma \ref {existence} for periodic Schr\"odinger
operators on $\R$, which will appear in our forthcoming work \cite {AD}.

\end{document}